\numberwithin{equation}{section} 
\newtheorem{theorem}{Theorem}[section]
\newtheorem{theorem*}{Theorem}
\newtheorem{lemma}[theorem]{Lemma}
\newtheorem{proposition}[theorem]{Proposition}
\newtheorem{corollary}[theorem]{Corollary}
\newtheorem{remark}[theorem]{Remark}
\newtheorem{remark*}[theorem*]{Remark}
\newtheorem{definition}[theorem]{Definition}
\newtheorem{question}[theorem]{Question}
\newtheorem{assumption}[theorem]{Assumption}
\newtheorem{conjecture}[theorem]{Conjecture}
\def\R{{\mathbb R}}
\def\l{\lambda}
\def\1{\left(}
\def\2{\right)}
\def\3{\left\{}
\def\4{\right\}}
\def\8{\infty}
\DeclareMathOperator*{\spt}{spt}
\DeclareMathOperator*{\dist}{dist}
\DeclareMathOperator*{\distb}{Dist_b}
\newcommand\vol{{\rm vol}}
\begin{document}
\title{Dynamics of Optimal Partial Transport}

\author[G. D\'avila]{Gonzalo D\'avila}
\address{Departamento de Matem\'atica, Universidad T\'ecnica Federico Santa Mar\'ia, Avenida Espa\~na 1680, Valpara\'iso, Chile}
\email{gonzalo.davila@usm.cl}

\author[Y-H. Kim]{Young-Heon Kim}
\address{Department of Mathematics\\ University of British Columbia\\ Vancouver, V6T 1Z2 Canada}
\email{yhkim@math.ubc.ca}

\thanks{This research is partially supported by 
Natural Sciences and Engineering Research Council of Canada Discovery Grants 371642-09 and 2014-05448 as well as the  Alfred P. Sloan Research Fellowship 2012--2016.
Part of this research has been done while Y.-H.K. was visiting 
 Korea Advanced Institute of Science and Technology (KAIST), 
the Mathematical Sciences Research Institute (MSRI) for the thematic program ``Optimal Transport: Geometry and Dynamics", and 
the Fields institute, Toronto for the thematic program on ``Calculus of Variations''.
\copyright 2015 by the authors.
}

\begin{abstract}
Optimal partial transport, which was initially studied by Caffarelli and McCann \cite{Ca-Mc}, is a variant of optimal transport theory, where only a portion of mass is to be transported in an efficient way.  Free boundaries naturally arise as the boundary of the region where the actual transport occurs.  This paper considers the  evolution dynamics of the free boundaries in terms of the change of $m$, the allowed amount of transported mass 
or the change of $\lambda$, the transportation cost cap, i.e. the allowed maximum cost for a unit mass to be transported. Focusing on the quadratic cost function, we show H\"older and Lipschitz estimates on the  speed of the free boundary motion in terms of $m$ and $\lambda$, respectively. It is also shown that the parameter $m$ is a Lipschitz function of $\lambda$, which previously was known only to be a continuous increasing function \cite{Ca-Mc}. 
\end{abstract}
\maketitle

\maketitle


\section{Introduction}

 Given two mass distributions we consider the phenomena of matching them together in a cost efficient way.  We are interested in the case when only a fraction of the mass is to be matched  and therefore only a part of the mass distributions are transported.  This, so-called, {\em optimal partial transport} problem, has been an interest among researchers starting from the work of Caffarelli and McCann \cite{Ca-Mc}. It is a natural generalization of the optimal transport problem of Monge and Kantorovich \cite{Mon81, Kan04b} where the full masses are matched; see, \cite{Vil03, Vil09} for a modern survey. 

In this paper, our aim is to understand the dynamical behaviour of the solution to the optimal partial transport problem: First, one can formulate this partial transport problem in two equivalent forms, using  two different parameters. One is the amount $m$ of mass to be transported, and the other one is the transportation cost cap $\lambda$, the allowed maximum cost for a unit mass to be transported. See Section~\ref{S: two formulations} for more details for the two equivalent formulations of the partial transport problem. 
As these parameters increase, the {\em active region} where the transport of mass actually occurs, changes (in fact, it changes monotonically \cite{Ca-Mc}).  Focusing on the cost function $c(x,y) =\frac 1 2  |x-y|^2$, the main goal of this paper is to present certain quantitative estimates (see Theorem~\ref{th: main} for precise statements) on  the change of the active region with respect to the change of the parameters, $m$ or $\lambda$.  In fact, such an estimate with respect to $m$ is not difficult, since the difference in amount of mass can be easily related to the volume change of the active region. Estimates with respect to $\lambda$  are more difficult to obtain, since  the relation between the transportation cost cap and the amount of transported mass, thus with the volume of active region, is indirect.  Note that the parameter $m$ can be considered as a function of $\lambda$, and it is known to be monotone \cite{Ca-Mc}. As a byproduct of our estimates, we show that $\lambda \mapsto m(\lambda)$ is Lipschitz (see Theorem~\ref{th: main} item 4, and see also, Corollary~\ref{cor: Lip in lambda}). Our main technical tool is a certain monotonicity of the potential functions associated to the partial transport problem (see Theorem~\ref{th: monotonicity}).  

Even though we do not pursue it here, these results can be extended to more general class of examples (see Remark~\ref{rm: conditions on c}).  However, to show a strict separation of the free boundaries for different values of parameters, we use the special structure of $c(x, y) =\frac 1 2  |x-y|^2$, and the assumption that mass distributions are smooth: we show a version of strong maximum principle  for the Monge-Amp\'ere equation, from which we show that the active region grows strictly monotonically in a point-wise sense (see Section~\ref{S: strict mono},  Theorem~\ref{th: strict free}).

\subsection*{Organization of the paper:}
Section~\ref{S: two formulations} gives the notation throughout the paper, and explains the equivalence between the two approaches to partial transport problems, with respect to $m$ and $\lambda$, respectively. Section~\ref{S: pre} sets up the main assumptions, and explains preliminary results.  Section~\ref{S: main} explains the main results, whose proofs are given in Sections~\ref{S: mass and free} and \ref{S: cost and free},  
where the key monotonicity of potential functions is given in Section~\ref{S: monotone potential}. 
Finally, Section~\ref{S: strict mono} shows the strict monotonicity of the active regions under additional assumptions.

\subsection*{Acknowledgement:} We thank Inwon Kim for helpful discussions and interest in this work.

\section{Two formulations of optimal partial transport}\label{S: two formulations}

In this section we give a more precise statement of the optimal partial transport problem that was initiated by Caffarelli and McCann \cite{Ca-Mc}. We will provide two equivalent formulations.

\subsection{Notation}

Optimal transport problems consist of three basic ingredients: source and target distributions, cost, and transport plans. We explain below the necessary notation for these ingredients we will use in this paper. 

\begin{enumerate}[\bf 1.]
\item  {\bf Source and target mass distributions:} Let $f, g\in L^1(\R^n)$ be two nonnegative functions, with 
\begin{align}\label{eq: prob}
\int_{\R^n} f (x)dx = 1= \int_{\R^n} g(y) dy.  
\end{align} 
For simplicity, we will also assume that both $f$ and $g$ have finite second moments, namely,  $\int_{\R^n} |x|^2 f(x) dx , \int_{\R^n} |y|^2 g(y) dy < \infty$. 
\item {\bf Transport plans:}
\begin{enumerate}[\bf a.]
\item  Let $\Gamma(f,g)$ be the set of all Borel measures on $\R^n\times\R^n$ whose left and right marginals are $f$ and $g$ respectively, that is $\gamma\in\Gamma(f,g)$ if $\gamma$ satisfies
\begin{align*}
\gamma(A\times\R^n)=\int_A f(x)dx, \quad
\gamma(\R^n\times A)=\int_A g(y)dy
\end{align*}
for all $A\subset\R^n$ Borel.
 \item Let $\Gamma_{\leq}(f,g)$ be the set of all Borel measures on $\R^n\times\R^n$ whose left and right marginals are dominated by $f$ and $g$ respectively, that is $\gamma\in\Gamma_{\leq}(f,g)$ if $\gamma$ satisfies
\begin{align*}
\gamma(A\times\R^n)\leq\int_A f(x)dx,\quad
\gamma(\R^n\times A) \leq\int_A g(y)dy
\end{align*}
for all $A\subset\R^n$ Borel.
\end{enumerate}
 \item 
 Define now the total mass of a Borel measure $\gamma$ on $\R^n \times \R^n$ by
\[
\mathbf{m}(\gamma)=\int_{\R^n\times\R^n}d\gamma,
\]

\item {\bf Costs and modified costs from the transportation cost cap:}
\begin{enumerate}[\bf a.] 
\item Let  $c: \R^n \times \R^n \to \R$ be the transportation cost. 
\item Fix $0 \le \lambda < \infty$. We use this value $\lambda$ as a cap on the value of cost that we allow for transportation.   This leads us to define the following modified transportation cost.
\begin{align}
 c_\lambda(x, y) = \begin{cases}
    c(x,y)   & \text{if $c(x,y) < \l$}, \\
     \l  & \text{otherwise}.
\end{cases}
\end{align}
\end{enumerate}

\end{enumerate}
 
 \subsection{Two formulations}
 Now, we  describe the two equivalent formulations of the partial transport problem. 
\subsubsection{\bf $m$-problem:  changing the portion of mass}

  The optimal partial transport problem can be stated as follows \cite{Ca-Mc}:
and fix $0<m\le \min\{\|f\|_{L^1},\|g\|_{L^1}\}$ which is the amount of mass to be transported. Given a cost function $c:\R^n\times\R^n\to\R$ the associated cost functional is given by,
\[
\mathcal{C}(\gamma)=\int_{\R^n\times\R^n} c(x,y)d\gamma(x,y).
\]
The partial optimal transport problem consists then to minimize $\mathcal{C}(\gamma)$ among all possible $\gamma\in\Gamma_{\leq}(f,g)$ subject to added constraint of $\mathbf{m}(\gamma)=m$, that is
\begin{align}\label{eq: OPT}
 \inf_{\gamma \in \Gamma_{\le} (f, g) \& \, \mathbf{m}(\gamma) = m}  \mathcal{C} (\gamma).
\end{align}
This problem can also be formulated in the following way: Let 
\begin{align*}
 K_f(m) =\{ h \in L^1  \ | 0 \le  h \le f \text{ a.e.}  \quad \& \,  \| h \|_{L^1} = m\}. 
\end{align*}
Then, 
\begin{align*}
 \inf_{\gamma \in \Gamma_{\le} (f, g) \& \, \mathbf{m}(\gamma) = m}  \mathcal{C} (\gamma)
 = \inf_{\gamma \in \Gamma(h_1, h_2) \& h_1 \in K_f(m), h_2 \in K_g (m)} \mathcal{C}(\gamma)
\end{align*}
In particular,  when $c(x, y) = \frac 1 2 |x-y|^2$, the problem on the right corresponds to finding the minimum Wasserstein $W_2$ distance between the two sets $K_f(m)$ and $K_g(m)$. This point of view is borrowed from a recent work of DePhilippis, M\'esz\'aros, F. Santambrogio, Velichkov \cite{DePhSMV14}, where they considered a projection problem of a probability measure to the set $K_f$ with respect to the $W_2$ metric.

 Existence of the optimizer of \eqref{eq: OPT} follows easily from a compactness argument. 
 Uniqueness of the solution to the optimal partial transport problem has been established under reasonable conditions.  Also, a progress has been made in understanding regularity of the free boundaries; \cite{Ca-Mc, Fi09, Fi10, Chen-Indrei15}, especially when the cost function $c(x,y)$ is given by the distance squared $c(x,y)=\frac 1 2 \dist^2(x, y)$. 
 Kitagawa and Pass \cite{Kit-Pa-14p} then considered the problem in the case where there are finitely many mass distributions to be partially matched, and made a connection to the barycenter problem in the space of probability measures that was considered by Agueh and Carlier \cite{Agueh-Carlier11}. 
 
\begin{remark}
 We note that the barycenter problem in the space of probability measures was extended in \cite{Kim-Pass14p} to a continuous family of measures over Riemannian manifolds, finding Jensen type inequalities over the space of probability measures; the corresponding partial transport problem has not been considered yet. 
\end{remark}

 \subsubsection{\bf  $\lambda$-problem: changing the transportation cost cap: (pay flat rate or do not transfer after a certain threshold).}

As shown in  \cite{Ca-Mc}  the $m$-problem \eqref{eq: OPT} can be equivalently formulated using a Lagrange multiplier $\l\geq 0$ 
and considering the unrestricted minimization problem
\begin{align}\label{minprob}
C_\l(f,g)=\inf\limits_{\gamma\in\Gamma_{\leq}(f,g)} \int_{\R^n\times\R^n}(c(x,y)-\l)d\gamma(x,y).  %
\end{align}
Given $m$, there is  $\l$, so that the solutions to these problems coincide \cite{Ca-Mc}. 

We now observe that the value of $\l$ carries an economical meaning. First, consider
\begin{align}\label{eq: lambda problem}
 \inf_{\gamma \in \Gamma(f,g)} \int_{\R^n \times \R^n} c_\lambda (x, y) d\gamma (x, y) .
\end{align}
Note that here the admissible set is $\Gamma(f,g)$, not $\Gamma_{\le } (f, g)$. 
The cost $c_\l$ reflects the practical situation where for example, a taxi driver charges a flat rate after a given upper bound. The following simple observation explains the equivalence between \eqref{minprob} and \eqref{eq: lambda problem}, thus equivalence between the $m$-problem   \eqref{eq: OPT} and the $\lambda$-problem \eqref{eq: lambda problem}.   
\begin{proposition}[Equivalence of two $\lambda$ problems]\label{prop: lambda equiv} The two problems \eqref{minprob} and \eqref{eq: lambda problem} are equivalent, namely, 
 \begin{align}\label{eq: C lambda equivalence}
 &\inf\limits_{\gamma\in\Gamma_{\leq}(f,g)} \int_{\R^n\times\R^n}(c(x,y)-\l)d\gamma(x,y) \\\nonumber
& =  -\l + \inf_{\gamma' \in \Gamma(f,g)} \int_{\R^n \times \R^n} c_\lambda (x, y) d\gamma' (x, y) .
\end{align}
Moreover, a minimizer of  \eqref{minprob} is the restriction of a minimizer of \eqref{eq: lambda problem} on the set $\{ (x, y) \in \R^n \times \R^n \  |\  c(x, y) < \lambda\}$. 
\end{proposition}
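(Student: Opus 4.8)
The plan is to exploit the sign structure of the two integrands. The integrand $c-\lambda$ in \eqref{minprob} is strictly negative exactly on $\{c<\lambda\}$ and nonnegative on $\{c\ge\lambda\}$, while the capped cost satisfies $c_\lambda=c$ on $\{c<\lambda\}$ and $c_\lambda=\lambda$ (its maximal value) on $\{c\ge\lambda\}$. The two minimizations are linked by two operations: \emph{restriction} of a full plan to $\{c<\lambda\}$, and \emph{completion} of a sub-plan to a full plan by filling in the leftover marginals. I would prove the identity \eqref{eq: C lambda equivalence} by showing each side bounds the other, and the ``moreover'' statement will drop out of the restriction step.

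For the inequality $\le$, I would take an arbitrary $\gamma'\in\Gamma(f,g)$ and set $\gamma:=\gamma'|_{\{c<\lambda\}}$. Since restricting a measure only decreases its marginals, and those of $\gamma'$ equal $f$ and $g$, we have $\gamma\in\Gamma_{\le}(f,g)$. A direct computation, using $\gamma'(\{c<\lambda\})+\gamma'(\{c\ge\lambda\})=\mathbf{m}(\gamma')=1$ from \eqref{eq: prob}, yields the \emph{exact} identity
\[
\int (c-\lambda)\,d\gamma \;=\; \int_{\{c<\lambda\}}(c-\lambda)\,d\gamma' \;=\; -\lambda + \int c_\lambda\,d\gamma'.
\]
Taking the infimum over $\gamma'$ shows that the left-hand side of \eqref{eq: C lambda equivalence} is at most the right-hand side.

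For the reverse inequality I would start from an arbitrary $\gamma\in\Gamma_{\le}(f,g)$ and first reduce to the case that $\gamma$ is supported on $\{c<\lambda\}$: replacing $\gamma$ by $\gamma|_{\{c<\lambda\}}$ keeps it in $\Gamma_{\le}(f,g)$ and cannot increase $\int(c-\lambda)\,d\gamma$, since the discarded part lies where $c-\lambda\ge 0$. Writing $m=\mathbf{m}(\gamma)$ and letting $p_1\le f$, $p_2\le g$ denote its marginals, the leftover densities $f-p_1$ and $g-p_2$ are nonnegative with equal total mass $1-m$, so they admit a coupling $\eta\in\Gamma(f-p_1,g-p_2)$ (for instance the normalized product measure when $m<1$, and $\eta=0$ when $m=1$). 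Then $\gamma':=\gamma+\eta\in\Gamma(f,g)$, and using $c_\lambda=c$ on $\spt\gamma$ together with the universal bound $c_\lambda\le\lambda$ on $\spt\eta$ gives $-\lambda+\int c_\lambda\,d\gamma'\le \int c\,d\gamma-\lambda m=\int(c-\lambda)\,d\gamma$. This shows the right-hand side of \eqref{eq: C lambda equivalence} is at most the left-hand side, completing the equality.

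Finally, the ``moreover'' assertion is immediate from the exact identity in the first step. If $\gamma'$ minimizes \eqref{eq: lambda problem} (existence following from weak-$*$ compactness of $\Gamma(f,g)$ and lower semicontinuity of the cost), then $\gamma=\gamma'|_{\{c<\lambda\}}$ attains in \eqref{minprob} the value $-\lambda+\int c_\lambda\,d\gamma'$, which by the equality just proved is the infimum in \eqref{minprob}; hence $\gamma$ is a minimizer of \eqref{minprob}. I expect no serious obstacle here — the only points demanding care are that the restricted measure is admissible (marginals only decrease), that discarding the region $\{c\ge\lambda\}$ cannot raise the cost in \eqref{minprob}, and that the leftover marginals, having equal mass, can always be coupled. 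The sign bookkeeping behind the exact identity is the crux that makes both inequalities tight simultaneously.
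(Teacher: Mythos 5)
Your proof is correct and follows essentially the same route as the paper: an exact identity obtained by restricting a full plan $\gamma'\in\Gamma(f,g)$ to $\{c<\lambda\}$, and the reverse inequality obtained by completing a sub-plan $\gamma\in\Gamma_{\le}(f,g)$ with a product coupling of the leftover marginals (the paper uses the same coupling $(f-f_\gamma)\otimes(g-g_\gamma)$, though it omits the normalization factor $1/(1-m)$ that you correctly include). The ``moreover'' assertion is deduced from the restriction identity in the same way in both arguments.
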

\begin{proof}
 For each $\gamma' \in \Gamma(f, g)$, let $\gamma \in \Gamma_{\le}(f, g)$ be its restriction to the set  $
 \{ (x, y) \in \R^n \times \R^n \  |\  c(x, y) < \lambda\}$. 
 Now, consider
\begin{align*}
 \int_{\R^n \times \R^n} c_\lambda (x, y) d\gamma' (x, y) 
 & =  \int_{\R^n \times \R^n} c(x, y) d\gamma (x, y) + \l \int_{\R^n \times \R^n} d (\gamma' - \gamma)(x, y)\\
 & =  \int_{\R^n \times \R^n}( c(x, y) -\l)d\gamma (x, y) + \l \int_{\R^n \times \R^n} d \gamma' (x, y)\\
 & = \int_{\R^n \times \R^n}( c(x, y) -\l)d\gamma (x, y) + \l  \quad \hbox{(from \eqref{eq: prob})}. 
 \end{align*}
 This shows that the right-hand side of \eqref{eq: C lambda equivalence} is greater than or equal to the left-hand side. 
 For the other inequality of \eqref{eq: C lambda equivalence}, notice that for each $\gamma \in \Gamma_{\le}(f, g)$, there is a $\gamma' \in \Gamma(f, g)$ whose restriction to the set $
 \{ (x, y) \in \R^n \times \R^n \  |\  c(x, y) < \lambda\}$ is $\gamma$: in particular, one can take $\gamma' = \gamma + (f - f_\gamma) \otimes (g-g_\gamma)$, where $f_\gamma, g_\gamma$ are the marginals of $\gamma$. This completes the proof, including the claimed correspondence between the minimizers of \eqref{minprob} and \eqref{eq: lambda problem}. 
\end{proof}

\section{Preliminaries, assumptions, and further notation}\label{S: pre}
In this section, we explain relevant previous results, key assumptions of this paper, and further notation.

The existence and regularity of the optimal plan was done first by Caffarelli and McCann in \cite{Ca-Mc} under the additional hypothesis of the supports of $f$ and $g$ to be strictly separated by a plane. In \cite{Fi09, Fi10} Figalli is able to lift the restrictions and even further considers the case when $f$ and $g$ overlap on an open set. The strategies used in \cite{Ca-Mc} and \cite{Fi09, Fi10} differ greatly. In \cite{Ca-Mc} the authors used a Lagrange multiplier $\l$, add a point at infinity and study a full transport problem for a redefined cost function. The approach in \cite{Fi09, Fi10} is to study the convexity problems of the total cost $\mathcal{C}(m)$ of the optimizer with the given $m$,  and in this way solve the problem directly.  Our paper follows the approach of \cite{Ca-Mc}, especially because we treat $\l$ as a parameter of the partial transport problem.

\begin{assumption}\label{as: main}
 We now state basic assumptions we impose throughout the paper: 
 \begin{enumerate}[\bf 1.]
\item {\bf (cost function)} From now on, let $c(x, y) =\frac 1 2 |x-y|^2$ on $\R^n\times \R^n$. This is the most important special case of the more general class of examples (see Remark~\ref{rm: conditions on c}). We focus on this case for simplicity of treatment. 
 \item {\bf (domains)} $\Omega$, $\Lambda$ are smooth, open, connected and bounded domains.  \item {\bf (separation of source and target)} 
We assume that there is a hyperplane that separates $\Omega$ and $\Lambda$. 
 Moreover, we assume that there is a constant $\delta >0$ that $|x-y| \ge \delta$  for all $(x, y) \in \Omega \times \Lambda$.   
 
 \item  {\bf (measure $\sim$ volume)} $\mu= f$, $\nu = g $, and the functions $ f, g \in L^1$  satisfies $ \beta_1  \le f, g \le \beta_2$ for some constants $\beta_1, \beta_2 >0$ on $\Omega$, $\Lambda$, respectively, and $\spt f =\overline \Omega$, $\spt g = \overline \Lambda$.  
\end{enumerate}
\end{assumption}

\begin{remark}\label{rm: conditions on c}
One can generalize most of the results in this paper to the cost functions $c : \R^n \times \R^n \to \R$ that satisfy
\begin{enumerate}
 \item {\bf (smooth)} $c \in C^2$;
 \item {\bf (twisted)} the maps $ y \mapsto \nabla_x c(x, y)$ and $ x \mapsto \nabla_y c(x, y)$   are one-to-one for all $(x, y) \in \R^n \times \R^n$;
 \item {\bf (nondegenerate)} the mixed second order derivatives $[D^2_{xy} c (x, y)]$ give an invertible matrix.  
\end{enumerate}
One exception is the assertion 5 in Theorem~\ref{th: main}, where the strict monotonicity of free boundary movement is shown: for this result, we used the special structure of the cost function $c(x, y) =\frac 1 2   |x-y|^2$ in a essential way, to use the theory of classical Monge-Amp\`ere equation. 

 The above conditions on the cost function are the usual assumptions one require, especially in the regularity theory of optimal transport, though in the latter one require more assumptions theory, such as the Ma-Trudinger-Wang condition \cite{MTW05, TW09} (see also \cite{Loe09}). All these assumptions are satisfied by the quadratic cost $c(x,y) = \frac 1 2 |x-y|^2$ which is the main focus of many recent papers, including this paper.  
\end{remark}

\subsection{Minimizers, active regions,  and potential functions}\label{ss: min and pot}
This subsection is devoted to explain some of the fundamental contributions from Caffarelli and McCann \cite{Ca-Mc} that are relevant in our paper. 
As shown in \cite{Ca-Mc}, under Assumption~\ref{as: main},  there is a unique minimizer of \eqref{minprob}, and for
\begin{align}
 \gamma_\l := {\rm argmin} \, C_\l (f, g), 
\end{align}
 its mass satisfies the relation
\begin{align}\label{eq: m lambda}
m(\l):=\mathbf{m}(\gamma_\l)=-\frac{\partial C_\l(f,g)}{\partial\l}, 
\end{align}
moreover,  $m(\l)$ increases continuously from $0$ to $\min\{\|f\|_{L^1},\|g\|_{L^1}\}$. Therefore each mass $m$ can be attained for an appropriate value of $\l$.

To understand the behaviour of $\gamma_\l$, we let,  as in \cite{Ca-Mc}, $f_\l \le f, g_\l \le g$ denote the marginals of the minimizer $\gamma_\l \in \Gamma(f_\l , g_\l)$ of \eqref{minprob}. Then, the regions $\spt f_\l$, $\spt g_\l$ can be regarded as the (closure of the) {\em active regions} in the source and target domains, where the actual transport occurs. Therefore both the source and target domain can be decomposed as an active region and an inactive region, and the common boundary arises naturally as a free boundary of the partial transport problem. More precisely we have the following definition:
\begin{definition}
\begin{enumerate}[1.]
\item 
(Active regions)
   The {\em active regions }of the partial transport problem are given by the following:
\begin{align*}
 A^\Omega_\l &: =\{ x \in \Omega \ | \exists y,  (x, y) \in \spt \gamma_{\l}   \},\\ 
 A^\Lambda_\l &: = \{ y \in \Omega \ | \exists x,  (x, y) \in \spt \gamma_{\l}  \}
\end{align*}
Notice that   $\spt f_{\l}$,  $\spt g_{\l}$ are the closures of $A^\Omega_\l$, $A^\Lambda_\l$, respectively. 
When its meaning is clear from the context, we will simply use $A_i$ to denote the active regions.

\item (Free boundaries of the active regions)
Let 
\begin{align*}
 F_\l^\Omega = \partial A^\Omega_\l \setminus \partial \Omega, \quad
  F_\l^\Lambda = \partial A^\Lambda_\l \setminus \partial \Lambda. 
\end{align*}
\end{enumerate}
\end{definition}

One of the important observations made in \cite[Corollary 2.4]{Ca-Mc} (see also \cite[Proposition 3.1, Remark 3.3]{Fi10})  is the following interior ball condition for the active regions, which under Assumption~\ref{as: main} can be stated as
\begin{align}\label{eq: IBC}
 \spt f_\l &= \hbox{\rm closure of } \bigcup_{(x_1, y_1) \in \spt \gamma_\l } \{ x \in \Omega \  | \ c(x, y_1) < c (x_1, y_1)\},\\\nonumber
  \spt g_\l & =\hbox{\rm closure of }  \bigcup_{(x_1, y_1) \in \spt \gamma_\l } \{ y \in \Lambda \  | \ c(x_1, y) < c (x_1, y_1)\}.
\end{align}
Another important consequence from \cite[Corollary 2.4]{Ca-Mc} is that 
\begin{align}\label{eq: saturation}
 \hbox{ $f=f_\l$, $g=g_\l$ on $\spt f_\l$, $\spt g_\l$, respectively,}
\end{align}
 which is important throughout the paper. We also know from \cite[Theorem 3.4]{Ca-Mc} that the active regions increase monotonically along with the parameter $\l$  and therefore also with  $m$.  This will be an extremely important tool when comparing optimal plans associated to different $m$'s, $\l$'s: \begin{align}\label{eq: active monotone}
 &\spt f_{\l_1} \subset \spt f_{\l_2}, \quad \spt g_{\l_1} \subset \spt g_{\l_2} ;\\\nonumber
 & A^\Omega_{\l_1} \subset A^\Omega_{\l_2}, \quad A^\Lambda_{\l_1} \subset A^\Lambda_{\l_2} \quad \hbox{for $\l_1 \le \l_2$}. 
\end{align}

\subsubsection{\bf Augmentation with infinity}

 In \cite{Ca-Mc} Caffarelli and McCann introduced an effective way to treat the partial transport problem by adding an auxiliary point at infinity: Namely, the strategy is to attach a point $\hat\infty$ to $\R^n$ and extend the cost function $c$ 
\begin{align*}
\hat c(x,y)=\begin{cases}c(x,y)-\l & \mbox{if}\ x\neq\hat\infty \ \text{and} \ y\neq\hat\infty; \\
														0&\text{otherwise},
							 \end{cases}
\end{align*}
and the measures $d\mu=f(x)dx$ and $d\nu=g(y)dy$ to $\hat{\R}^n=\R^n\cup\{\hat\infty\}$ by 
\begin{align*}
\hat\mu=\mu+\|g\|_{L^1}\delta_{\hat\infty},\\
\hat\nu=\nu+\|f\|_{L^1}\delta_{\hat\infty}.
\end{align*}
Then, the problem  \eqref{minprob} is equivalent to 
\begin{align}\label{eq: hat problem}
 \inf_{\hat \gamma \in \Gamma(\hat \mu, \hat \nu)} \int_{\hat \R^n \times \hat \R^n} \hat c(x, y) d\hat \gamma (x, y)
\end{align}
where similarly as before $\Gamma(\hat \mu, \hat \nu)$ is the set of Borel measures on $\hat \R^n \times \hat \R^n$ with marginals $\hat \mu, \hat \nu$. In particular, the minimizer $\gamma_\l$ of \eqref{minprob} is nothing but the restriction of the minimizer $\hat \gamma_\l$ of \eqref{eq: hat problem} to $\R^n \times \R^n$. We refer \cite[Section 2]{Ca-Mc} for more details.

In particular, from \eqref{eq: saturation} this implies that points outside the active regions are matched with $\hat \infty$ by the plan $\hat \gamma_\l$. In other words, the difference $\hat \gamma - \gamma_
l$ is a measure supported on $$\left(\{  \spt (f-f_\l) \times \hat \infty \} \right)  \cup  \left(\{ \hat \infty \} \times (\spt (g-g_\l)\right) \cup \left(\{\hat \infty\} \times \{\hat \infty\} \right)$$ inside the product space $\hat \R^n \times \hat \R^n$. This  can be written as 
\begin{align}\label{eq: match with infty}
 \hat \gamma_\l -\gamma_\l 
&  \in \Gamma_{\le} \left(  f-f_\l,  \|f\|_{L^1} \delta_{\hat \infty}\right) \\\nonumber
& \ \ \ \ \ + \Gamma_{\le} \left( \|g\|_{L^1} \delta_{\hat \infty},  
  g-g_\l \right) + \Gamma_{\le}\left(\|g\|_{L^1} \delta_{\hat \infty} ,  \|f\|_{L^1} \delta_{\hat \infty} \right)
\end{align}
where $\Gamma_{\le} ( \alpha, \beta)$ denotes as before the set of measures whose marginals are bounded above by the  measures $\alpha, \beta$. 

\subsubsection{\bf Potential functions}
We will make use of the $\hat c$-potential functions in \cite{Ca-Mc}.  In our presentation, we will use $c$ or $\hat c$-convexity instead of $c$-concavity in \cite{Ca-Mc}, since convex functions are more natural in terms of Monge-Amp\`ere equations that we will consider later.

\begin{definition}[c-convexity and c-transform] \ 

\begin{enumerate}
 \item  A function $\hat{u}:\hat \R^n\to (-\infty, \infty]$ is said to be $\hat c$-convex if it is not identically $\infty$ on $\spt(\hat\mu)$ and satisfies
\[
\hat{u}(x)=\sup\limits_{y\in spt(\hat\nu)} - \hat{c}(x,y)-\hat{u}_{\hat{c}}(y)\equiv \hat{u}_{\hat{c}\hat{c}},
\]
where $\hat u^{\hat c}$ is the $\hat c$-conjugate:
\[
\hat{u}^{\hat{c}}(y)=\sup\limits_{x\in spt(\hat\mu)} - \hat{c}(x,y)-\hat{u}(x).
\]
\item  The $\hat c$-subdifferential of $\hat u$ is defined as 
 \begin{align*}
 &\partial^{\hat c} \hat u (x)   =\{ y \in  \hat \R^n   \ |\  \hat u (x) + \hat u_{\hat c} (y) = - \hat c(x, y)\},\\
 & \partial^{\hat c} \hat u = \{ (x, y) \in  \hat \R^n \times \hat \R^n   \ |\  \hat u (x) + \hat u_{\hat c} (y) =  - \hat c(x, y)\}.
\end{align*}
The $\hat c$-subdifferential $\partial^{\hat c} \hat u^{\hat c}$ of the $\hat c$-conjugate $\hat u^{\hat c}$ is similarly defined and from symmetry, we have
\begin{align*}
 \partial^{\hat c} \hat u = \partial^{\hat c} \hat u^{\hat c}.
\end{align*}
 \end{enumerate}
\end{definition}
\begin{remark}
 We obtain definitions for $\hat c$-concavity as in \cite[Definition 2.1]{Ca-Mc}, by just changing the sign and taking supremum instead of infimum above. 
\end{remark}
Note that the $\hat c$-convex functions are locally Lipschitz and semi-convex on $\R^n$, since they are obtained by taking supremum of $C^2$ functions (since $-\hat  c(x, y) = - \frac 1 2 |x-y|^2  +\l$ on $\R^n$). In particular, this implies  that $\hat u$   are differentiable a.e. and they are subdifferentiable, i.e. at each point $x \in \R^n$, the subdifferential 
\begin{align*}
 \partial \hat u(x) = \{  p \in \R^n \ | \ u(y) \le u(x) + p \cdot (y-x), \quad \forall y \in \R^n\}
\end{align*}
exists. 
This clearly holds also for $\hat u^{\hat c}$.  Recall that, from a well-known theorem of Alexandrov, semi-convex functions are twice differentiable a.e. 

The point of the $\hat c$-convex functions is the following fundamental fact:

\begin{lemma}[Uniqueness and characterization]\label{lem: unique} (See \cite[Lemma 2.3, Proposition 2.9]{Ca-Mc}.)
Use Assumption~\ref{as: main}. 
 There is a $\hat c$-convex function $\hat u$  (that is unique up to additive constants) such that for each optimizer $\hat \gamma$ of \eqref{eq: hat problem}, 
\begin{align}\label{eq: inside c-subdiff}
 \spt \hat \gamma \subset \partial^{\hat c} \hat u \,  (= \partial^{\hat c} \hat u^{\hat c}).
 \end{align}
\end{lemma}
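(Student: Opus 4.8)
The plan is to derive both assertions from Kantorovich duality on the compactified space $\hat\R^n$, followed by a complementary-slackness and connectedness argument for the uniqueness. First I would record that the augmented cost $\hat c$ is bounded below (by $-\l$) and lower semicontinuous on $\hat\R^n\times\hat\R^n$: it is continuous at finite points, while at a point $(\hat\infty,y)$ with $y\in\R^n$ any finite approximating sequence has $\hat c\to+\infty$, so the liminf dominates the value $0$ there, and likewise at $(x,\hat\infty)$ and $(\hat\infty,\hat\infty)$. Hence the standard Kantorovich duality applies to \eqref{eq: hat problem}, producing a $\hat c$-convex potential $\hat u$ (with conjugate $\hat u^{\hat c}$) that maximizes the dual functional $-\int \hat u\, d\hat\mu-\int \hat u^{\hat c}\, d\hat\nu$, with common optimal value equal to that of \eqref{eq: hat problem}.

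For the subdifferential containment I would use complementary slackness. By $\hat c$-convexity one has $\hat u(x)+\hat u^{\hat c}(y)+\hat c(x,y)\ge 0$ pointwise, while equality of the primal and dual values forces $\int(\hat u(x)+\hat u^{\hat c}(y)+\hat c(x,y))\,d\hat\gamma=0$ for every optimizer $\hat\gamma$ of \eqref{eq: hat problem}, since the two marginal integrals reproduce the dual functional. A nonnegative integrand with zero integral vanishes $\hat\gamma$-a.e.; as the integrand is lower semicontinuous its zero set is closed, so $\spt\hat\gamma\subset\partial^{\hat c}\hat u$, which is exactly \eqref{eq: inside c-subdiff}. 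The identity $\partial^{\hat c}\hat u=\partial^{\hat c}\hat u^{\hat c}$ is the symmetry already noted in the definition.

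The substantive part is uniqueness up to additive constants. I would take two dual-optimal potentials $\hat u_1,\hat u_2$; by the slackness argument both satisfy $\spt\hat\gamma\subset\partial^{\hat c}\hat u_i$ for one fixed optimizer $\hat\gamma$. On the active region $A^\Omega_\l$ the restriction $\gamma_\l$ is concentrated on the graph of a map (since $\mu$ is absolutely continuous and each $\hat u_i$ is differentiable $\mu$-a.e. by Alexandrov), and at a point of differentiability the membership $(x,y)\in\partial^{\hat c}\hat u_i$ yields $\nabla\hat u_i(x)=y-x$. Hence $\nabla\hat u_1=\nabla\hat u_2$ $\mu$-a.e., and since $f\ge\beta_1>0$ on $\Omega$ this holds Lebesgue-a.e. on the active region, so $\hat u_1-\hat u_2$ is locally constant on its interior. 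On the inactive region each $x$ is matched to $\hat\infty$ (by \eqref{eq: saturation} and \eqref{eq: match with infty}), so $(x,\hat\infty)\in\partial^{\hat c}\hat u_i$ together with $\hat c(x,\hat\infty)=0$ gives $\hat u_i(x)=-\hat u_i^{\hat c}(\hat\infty)$, a constant; after normalizing the additive freedom so that $\hat u_1^{\hat c}(\hat\infty)=\hat u_2^{\hat c}(\hat\infty)$, the difference $w:=\hat u_1-\hat u_2$ vanishes on the inactive region.

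It remains to glue these pieces, and I expect this to be the main obstacle. Here I would invoke the continuity of each $\hat u_i$ on $\R^n$ together with the connectedness of $\Omega$ from Assumption~\ref{as: main}: $w$ is continuous, vanishes on the open inactive set, and is locally constant on the active set, so across the free boundary $F^\Omega_\l$ each active component inherits the value $0$, giving $w\equiv0$ on $\overline\Omega$, and then $\hat u_1^{\hat c}=\hat u_2^{\hat c}$ on $\overline\Lambda$ by the symmetric argument. The delicate points are ruling out active components whose closure fails to meet the inactive region, and separately treating the borderline case in which the source is fully active ($m=\|f\|_{L^1}$), where the constant must be pinned from the target side or through the matching of $\hat\infty$ with $\hat\infty$ recorded in \eqref{eq: match with infty}. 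The interior ball condition \eqref{eq: IBC} and the monotonicity \eqref{eq: active monotone}, which guarantee a well-behaved free boundary separating the active and inactive regions, are precisely what make this gluing go through.
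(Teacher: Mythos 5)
First, a point of reference: the paper does not prove this lemma at all --- it is imported from \cite{Ca-Mc} (their Lemma 2.3 and Proposition 2.9) --- so your proposal can only be compared with the cited argument, and your overall route (duality on the compactified space, complementary slackness for the support inclusion, gradient determination plus pinning at $\hat\infty$ for uniqueness) is a faithful reconstruction of it. The existence/containment half is essentially complete: $\hat c$ is l.s.c.\ and bounded below, duality with dual attainment applies, and since $\hat\infty$ is an isolated point of $\spt\hat\mu$ and of $\spt\hat\nu$ (the domains being bounded), the potentials are continuous on the supports, so the zero set of the nonnegative l.s.c.\ function $\hat u(x)+\hat u^{\hat c}(y)+\hat c(x,y)$ is relatively closed there and must contain $\spt\hat\gamma$ for every optimizer $\hat\gamma$, as you argue. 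The computation $\nabla\hat u_i(x)=y-x$ at points of differentiability and the identity $\hat u_i\equiv-\hat u_i^{\hat c}(\hat\infty)$ on the inactive region are also correct (minor quibble: a.e.\ differentiability is Rademacher, via local Lipschitzness, not Alexandrov).

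The genuine gap is the gluing step, which you flag but leave open, and which \emph{as stated} does not work: continuity of $w=\hat u_1-\hat u_2$, vanishing of $w$ on the open inactive set, local constancy on the active set, and connectedness of $\Omega$ do \emph{not} together imply $w\equiv 0$ --- a Cantor-function-type profile is continuous, locally constant on a dense open set of full measure, and non-constant, so nothing in this list of hypotheses propagates the value $0$ across the free boundary. Moreover, the monotonicity \eqref{eq: active monotone} you invoke is a red herring: it compares active regions for two \emph{different} values of $\l$, whereas uniqueness concerns a single fixed $\l$. What closes the argument is measure-theoretic rather than topological: (i) $w$ is locally Lipschitz (both potentials are), hence absolutely continuous along segments; (ii) the interface $\partial A^\Omega_\l\cap\Omega$ is Lebesgue-null --- by the interior ball condition \eqref{eq: IBC} the active region contains, near each of its boundary points and at every small scale $\rho$, a ball of radius $\gtrsim\rho$ (the interior balls have radii at least $\delta$ by the separation assumption), so the boundary has Lebesgue density bounded away from $1$ at each of its own points and is null by the density theorem (alternatively, it is a Lipschitz hypersurface by Lemma~\ref{lem: F Lip and semi-convex}, hence null). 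Combining (i) and (ii) with your two computations gives $\nabla w=0$ a.e.\ on the whole connected open set $\Omega$, hence $w$ is constant on $\Omega$; the constant is $0$ whenever the inactive region is nonempty, and in the fully active case constancy is exactly the freedom ``up to additive constants'' that the lemma permits. One then finishes by noting that equality of $\hat u_1,\hat u_2$ on $\overline\Omega\cup\{\hat\infty\}$ forces $\hat u_1^{\hat c}=\hat u_2^{\hat c}$, and hence $\hat u_1=\hat u_2$ everywhere, since $\hat u_i=(\hat u_i^{\hat c})^{\hat c}$ for $\hat c$-convex functions.
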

\begin{remark}
 The above inclusion \eqref{eq: inside c-subdiff} can be improved, when restricted to $\Omega \times \Lambda$, to
\begin{align}\label{eq: equal to c-subdiff}
 \spt \hat \gamma \cap (\Omega \times \Lambda ) =  \partial^{\hat c} \hat u \cap (\Omega \times \Lambda ) , \quad \hbox{$\hat \gamma$-a.e.}.
\end{align}
The reason is that inside $\Omega\times\Lambda$, $\partial^{\hat c}\hat u$ is equal to the $c$-subdifferential $\partial^c u$ of the $c$-potential function $u$ (which is nothing but the restriction of the definition of $\partial^{\hat c} \hat u$ without having the point $\hat \infty$), which is well-known (see e.g. \cite{GM96}) to coincide $\hat \gamma$-a.e in $\Omega\times \Lambda$, with the graph of a Borel measurable map $T$, which then coincides with $\spt \gamma$. (The map $T$ is the optimal map in the ordinary optimal transport problem). 
 \end{remark}

\subsubsection{\bf Normalized potential $u_\l$} 
Since we will consider the optimization problem \eqref{minprob} (equivalently \eqref{eq: hat problem}, for each $\l$, let  $\gamma_\l$, $\hat \gamma_\l$, denote the minimizers  of \eqref{minprob}, \eqref{eq: hat problem}, respectively, and $\hat u_\l$, the corresponding $\hat c$-convex function for $\hat \gamma_\l$ given in Lemma~\ref{lem: unique}. 
Let $u_\l$ denote the restriction of $\hat u_\l$ to $\R^n$, and from now on, we use the normalization $$\hat u_\l (\hat \infty) =0.$$ 
For clarification, we call the restriction $u_\l$, {\em the normalized potential}. 
An important observation for normalized solutions is the following. Namely, from \eqref{eq: match with infty} and \eqref{eq: inside c-subdiff}, we see that 
\begin{align*}
\hat \infty \in  \partial^{\hat c} \hat u^{\hat c}_\l (y) \quad \hbox{ for each $y \in \hat \R^n \setminus A^\Lambda_\l$,}
\end{align*}
thus, for the normalized solution $\hat u_\l$, we have for each $y \in \R^n \setminus A^\Lambda_\l$, 
\begin{align*}
 \hat u_\l^{\hat c} (y) = - \hat c( \hat \infty, y)  - \hat u_\l (\hat \infty) = 0 + 0 = 0, 
\end{align*}
using the definition of the cost $\hat c$ and the $\hat c$-subdifferential.
Therefore, we see from the definition of $\partial^{\hat c} \hat u_\l$ and $u_\l$, 
\begin{align}\label{eq: u and m}
 u_\l  (x) &\ge  - c(x, y) + \l \quad \hbox{ whenever $x\in \R^n$,  and $y \in \hat \R^n \setminus A^\Lambda_\l$;}\\\nonumber
 u_\l  (x) & =  - c(x, y) + \l  \hbox{ if moreover $ y \in \partial^{\hat c} \hat u (x)$. }
\end{align}

\subsubsection{\bf $C_{loc}^\alpha$ regular partial transport mappings}
We finish this section with a characterization and regularity result given in \cite{Ca-Mc}:
\begin{theorem}[Caffarelli and McCann \cite{Ca-Mc}; see also \cite{Fi10, Fi09}]\label{th: Ca-Mc}
Use Assumption~\ref{as: main}. Then, for each $0< \l < \infty$, the optimal transport $\gamma_\l$, i.e. the solution to \eqref{minprob}, uniquely exists and is given by a $C^\alpha_{loc}$ mapping $ T_\l: A^\Omega_\l \to A^\Lambda_\l$ 
such that 
\begin{align*}
\gamma_\l = (id \times T_\l)_\# f_\l
\end{align*}
and 
\begin{align*}
 T_\l (x)  = x + \nabla u_\l (x). 
\end{align*}
Here, $\alpha>0$ depends on $n, f, g$. 
Moreover, the free boundaries $F^\Omega_\l, F^\Lambda_\l$ are $C^1_{loc}$. 
\end{theorem}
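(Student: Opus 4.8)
The plan is to assemble the statement from the $\hat c$-convex potential machinery recalled above, and then invoke the regularity theory for the Monge--Amp\`ere equation. First I would note that existence of a minimizer $\gamma_\l$ of \eqref{minprob} follows from the compactness argument already mentioned, while Lemma~\ref{lem: unique} produces a $\hat c$-convex potential $\hat u_\l$, unique up to additive constants, with $\spt \hat\gamma_\l \subset \partial^{\hat c}\hat u_\l$. The improved identity \eqref{eq: equal to c-subdiff} then pins down, $\hat\gamma_\l$-a.e.\ inside $\Omega \times \Lambda$, the support of the plan as the $c$-subdifferential $\partial^c u_\l$, which for the quadratic cost coincides with the graph of a Borel map $T_\l$. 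This already yields both the uniqueness of $\gamma_\l$ (uniqueness of the potential forces uniqueness of the graph, hence of the plan) and the representation $\gamma_\l = (id\times T_\l)_\# f_\l$.

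To obtain the explicit formula $T_\l(x) = x + \nabla u_\l(x)$, I would exploit the structure of the quadratic cost. Writing $\hat c(x,y) = \frac12|x-y|^2 - \l$ on $\R^n\times\R^n$, the $\hat c$-convexity of $u_\l$ expands, after completing the square, as
\[
u_\l(x) + \tfrac12|x|^2 = \sup_{y}\Big[ x\cdot y + \big(\l - \tfrac12|y|^2 - \hat u_\l^{\hat c}(y)\big)\Big],
\]
so that $\phi_\l := u_\l + \frac12|\cdot|^2$ is convex, being a supremum of affine functions of $x$. A maximizing $y$ in this expression is precisely an element of $\partial^{\hat c}\hat u_\l(x)$, i.e.\ a subgradient of $\phi_\l$ at $x$; at points of differentiability, which form a set of full measure by the semiconvexity of $u_\l$ together with Alexandrov's theorem, this forces $y = \nabla\phi_\l(x) = x + \nabla u_\l(x)$, giving the claimed formula a.e.\ on the active region.

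The regularity assertions are the substantive part. Here I would appeal to Caffarelli's interior regularity theory for the Monge--Amp\`ere equation: on the active region $A^\Omega_\l$ the saturation relation \eqref{eq: saturation} gives $f = f_\l$ and $g = g_\l$, so $\phi_\l$ is an Alexandrov solution of $\det D^2\phi_\l = f/(g\circ T_\l)$ with both densities bounded between $\beta_1$ and $\beta_2$ by Assumption~\ref{as: main}. The interior ball condition \eqref{eq: IBC} supplies the geometric control on the active regions needed for the regularity theory to apply, yielding strict convexity of $\phi_\l$ and hence $C^\alpha_{loc}$ regularity of $T_\l = \nabla\phi_\l$ (equivalently $C^{1,\alpha}_{loc}$ regularity of $\phi_\l$), with $\alpha$ depending only on $n$ and the density bounds $\beta_1,\beta_2$. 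Finally, the $C^1_{loc}$ regularity of the free boundaries $F^\Omega_\l, F^\Lambda_\l$ is exactly the free-boundary analysis of \cite{Ca-Mc}, resting again on the interior ball condition and the semiconvexity of the potential.

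The main obstacle I anticipate is the regularity step, and in particular the free-boundary regularity: unlike the existence, the uniqueness, and the formula for $T_\l$, which follow formally from the potential machinery and a short convexity computation, the $C^\alpha_{loc}$ and $C^1_{loc}$ statements require the full strength of the interior ball condition \eqref{eq: IBC} together with the delicate analysis of Caffarelli and of Caffarelli--McCann, and cannot be reduced to a routine argument. Since the statement is attributed to \cite{Ca-Mc} (see also \cite{Fi09, Fi10}), I would ultimately cite those works for this step rather than reproduce the proof.
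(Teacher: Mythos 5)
Your proposal is correct and is essentially the paper's own treatment: the paper states this theorem purely as background quoted from \cite{Ca-Mc} (see also \cite{Fi09, Fi10}) with no proof, and your assembly—existence by compactness, uniqueness and the formula $T_\l(x)=x+\nabla u_\l(x)$ from the $\hat c$-convex potential machinery and the convexity of $u_\l+\frac12|x|^2$ (exactly the device the paper itself uses in \eqref{eq: v i}), with the $C^\alpha_{loc}$ map regularity and $C^1_{loc}$ free-boundary regularity deferred to Caffarelli--McCann and Figalli—matches that stance. Your recognition that the regularity assertions are the genuinely hard part and cannot be reproduced by a routine argument is precisely why the paper cites rather than proves them.
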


\section{Main results}\label{S: main}
We now explain in detail our main results. 
First, we introduce some notation for simplicity of the presentation. 

\begin{definition}[Notation $\lesssim, \gtrsim, \sim$]
 We use  $a\gtrsim b$, $a\lesssim b$, $a\sim b$, to denote
$a \ge  C_1 b$, $a \le C_2 b$, $a\ge C_1 b  \quad \& \quad a \le C_2b$, respectively, for some constants $C_1, C_2$, depending only on $\Omega, \Lambda$ and the lower and upper bounds of $\mu$, $\nu$ on $\Omega$, $\Lambda$, respectively and in particular, not on the parameters $\lambda$, $m$. 

\end{definition}

\subsection{Main question}
The main aim of this work is to get quantitative results on how the active region changes under the  variation of the parameters $m$ or $\l$. 
We give further notation and a few definitions to set up the question more precisely, 
\begin{definition}[Notation and definitions]
\begin{enumerate}[1.]
\item 
For $i=1,2$, consider the values $\lambda_i$, $m_i = m(\l_i)$ with 
$0 < \lambda_1 < \lambda_2$
$0 < m_1 < m_2<1$. 
We let $\gamma_\l$, $\hat \gamma_i$, $u_i$, $\hat u_i$, $i=1,2$, denote the corresponding objects $\gamma_\l$, $\hat \gamma_\l$, $u_\l, \hat u_\l$ in the previous section. By the same way, we  let $A^\Omega_i$, $A^\Lambda_i$, $F^\Omega_i$, $F^\Lambda_i$, denote the corresponding active regions and free boundaries.  When its meaning is clear from the context, we will simply use $A_i$, $F_i$.

\item  ($b$-distance) We define the $b$-distance between the free boundaries of the active regions (in $\Omega$): $$\distb (F_1, F_2):= \inf\{\epsilon \ | \ F_j \subset N_\epsilon (F_i \cup \partial \Omega), \quad i, j = 1, 2 \}  $$
This distance is similar to the Hausdorff distance, but it  considers the effect from the boundary $\partial \Omega$. 
\end{enumerate}
 
\end{definition}

\begin{remark}
 Note that smallness of $\distb(F_1, F_2)$ implies the two free boundaries $F_1$ and $F_2$ are close to each other in a uniform manner outside a small neighbourhood of the boundary of the domains. Incorporating the boundary $\partial \Omega$ for measuring the distance between the free boundaries looks somewhat  technical,  but our method in the proof of Theorem~\ref{th: main} assertion 3 requires it. However, 
one can view this distance $\distb$  as  the Hausdorff distance  between the boundaries of the inactive regions $\Omega \setminus A_i$, $i=1,2$. 
\end{remark}

We now use the above notation to state the main question of the present paper:
\begin{question}[Main question]\label{qe: main} \ 
\begin{itemize}
 \item Can we estimate $\distb (F_1, F_2) $  in term of the values $m_1, m_2$ or $\lambda_1, \lambda_2$?
 \item Since clearly the cost cap $\l$ and the mass $m$ are related, can we find a quantifiable relation between them?
\end{itemize}
\end{question}

\subsection{Main Results}

Addressing Question~\ref{qe: main}, our main results give quantitative estimates for the free boundary movement as the parameters $m$ and $\lambda$ vary. 

\begin{theorem}[Quantitative results on free boundaries]\label{th: main}
Use Assumption~\ref{as: main}.
Then, the following hold:
\begin{enumerate}
 \item[1.] {\bf (Lower bound of speed of free boundary movement)} 
 $$m_2 - m_1 \lesssim \distb (F_1, F_2);$$
 \item[2.] {\bf (Holder continuity of free boundary movement  in $m$)} 
 $$\distb (F_1, F_2) \lesssim (m_2 - m_1)^{\frac{1}{n}};$$
 
 \item[3.] {\bf (Lipschitz bound of free boundary  movement in $\lambda$)} 
 $$\distb (F_1, F_2) \lesssim (\lambda_1 - \lambda_1);$$
 
 \item[4.] {\bf (Lipschitz bound of $m(\lambda)$)}  
 $$m_2-m_1 \lesssim \lambda_2-\lambda_1.$$
 
 \item[5.] {\bf (Strict monotonicity of free boundary movement)} 
  Assume further that $f, g  \in C^\alpha$ for some $\alpha>0$, then $$F_1 \cap F_2 =\emptyset .$$ 
\end{enumerate}
\end{theorem}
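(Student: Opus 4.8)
The plan is to treat the five assertions in their natural order, reducing items 1, 2 and 4 to elementary volume/geometry once two facts are in hand: the interior ball condition \eqref{eq: IBC} (each active region is, up to closure, a union of Euclidean balls of radius $\geq\d$, so each $F^\Omega_i$ enjoys a uniform interior ball condition), and a linear growth estimate for the normalized potential into the active region. The genuinely new work sits in item 3, which rests on the potential monotonicity of Theorem~\ref{th: monotonicity}, and in item 5, which rests on a strong maximum principle. For item 1, by saturation \eqref{eq: saturation} and nesting \eqref{eq: active monotone}, $m_2-m_1=\int_{A^\Omega_2\setminus A^\Omega_1}f\,dx\lesssim |A^\Omega_2\setminus A^\Omega_1|$, and $A^\Omega_2\setminus A^\Omega_1$ is the region trapped between the two free boundaries (equivalently, the difference of the nested inactive regions $\Omega\setminus A^\Omega_i$), which are uniformly $C^1$ hypersurfaces of bounded area lying within $b$-distance $\distb(F_1,F_2)$ of each other; hence its volume is $\lesssim\distb(F_1,F_2)$. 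For item 2 I would produce, at a point realizing the $b$-distance, a ball of radius comparable to $\epsilon:=\distb(F_1,F_2)$ inside $A^\Omega_2\setminus A^\Omega_1$: if $x_0\in F_2$ achieves $\dist(x_0,F_1\cup\partial\Omega)=\epsilon$, then $B_\epsilon(x_0)$ misses $A^\Omega_1$ while the interior ball of $A^\Omega_2$ at $x_0$ fills a definite fraction of $B_\epsilon(x_0)$, giving volume $\gtrsim\epsilon^n$; the case $x_0\in F_1$ uses instead that $B_\epsilon(x_0)\subset A^\Omega_2$ and that the uniformly $C^1$ boundary $F_1$ leaves a nondegenerate fraction of $B_\epsilon(x_0)$ in the $\l_1$-inactive region. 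Since $m_2-m_1\geq\beta_1|A^\Omega_2\setminus A^\Omega_1|\gtrsim\epsilon^n$, this yields $\distb(F_1,F_2)\lesssim(m_2-m_1)^{1/n}$.

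For item 3 the key preliminary is linear growth: if $x_1\in F_2$ is matched to $y_1$ then $u_2(x_1)=0$, and the $\hat c$-convexity bound \eqref{eq: u and m} applied with the fixed point $y_1$ gives, for every $x$, $u_2(x)\geq\frac12|x_1-y_1|^2-\frac12|x-y_1|^2$; since $|x_1-y_1|\geq\d$, moving from $x_1$ toward $y_1$ a distance $t\lesssim\d$ forces $u_2\gtrsim\d\,t$. I combine this with the consequence of Theorem~\ref{th: monotonicity} that I will use in the pointwise form $0\leq u_2-u_1\leq\l_2-\l_1$. For a point $x_0\in F_1$ with $\dist(x_0,F_2\cup\partial\Omega)=:d$ large, the nearest point $x_1\in F_2$ lies along the inward normal of $A^\Omega_2$, hence radially toward $y_1$, so linear growth gives $u_2(x_0)\gtrsim\d\,d$, while $u_1(x_0)=0$ and monotonicity give $u_2(x_0)\leq\l_2-\l_1$; thus $d\lesssim\l_2-\l_1$. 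For a point $x_0\in F_2$ far from $F_1\cup\partial\Omega$, the ball about $x_0$ of that radius is entirely $\l_1$-inactive (so $u_1=0$ there), while moving into $A^\Omega_2$ toward the match of $x_0$ makes $u_2\gtrsim\d\cdot\dist(x_0,F_1\cup\partial\Omega)$, again capped by $\l_2-\l_1$. Taking the larger of the two distances gives $\distb(F_1,F_2)\lesssim\l_2-\l_1$. Item 4 is then immediate: $m_2-m_1\lesssim\distb(F_1,F_2)\lesssim\l_2-\l_1$ by items 1 and 3.

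For item 5, with $f,g\in C^\alpha$ the regularity theory (Theorem~\ref{th: Ca-Mc} and its higher-order version under Hölder data) upgrades the potentials to $C^{2,\alpha}$ on the interiors of the active regions and the free boundaries to $C^{2,\alpha}$. Suppose for contradiction $x_0\in F_1\cap F_2$; by nesting the two free boundaries are internally tangent there. On the common active region $A^\Omega_1$ near $x_0$ both potentials solve $\det(I+D^2u_i)=f/(g\circ T_i)$ with $T_i=\mathrm{id}+\nabla u_i$ (the right-hand side sharing the same $f$ by saturation \eqref{eq: saturation}); subtracting and interpolating the determinant by its cofactor matrix shows that $w:=u_2-u_1\geq0$ satisfies a uniformly elliptic differential inequality with no zeroth-order term, the inhomogeneity being controlled by $\nabla w=T_2-T_1$ through the modulus of continuity of $g$. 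Since $w\geq0$ vanishes at $x_0$, the strong maximum principle and Hopf lemma—carried out up to the free boundary in Theorem~\ref{th: strict free}—force $w\equiv0$ near $x_0$, whence $A^\Omega_1=A^\Omega_2$ locally and, by unique continuation together with \eqref{eq: saturation}, $\l_1=\l_2$, contradicting $\l_1<\l_2$.

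The heart of the quantitative part, and the step I expect to be the main obstacle, is item 3, precisely the monotonicity of Theorem~\ref{th: monotonicity} that converts the scalar change $\l_2-\l_1$ into the uniform pointwise sandwich $0\leq u_2-u_1\leq\l_2-\l_1$: unlike the $m$-estimates, the relation between $\l$ and the geometry is indirect, and it is the interplay of this sandwich with the linear growth that makes the bound Lipschitz rather than merely Hölder. The second delicate point is validating the strong maximum principle of item 5 up to the free boundary, where the uniform ellipticity of the linearized operator and the one-sided structure of $w$ must be controlled; this is exactly the content deferred to Theorem~\ref{th: strict free}, and is where the special quadratic structure of $c$ is used essentially.
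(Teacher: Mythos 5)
Your items 1, 2 and 4 are essentially the paper's own arguments (volume trapped between uniformly Lipschitz free boundaries of bounded $\mathcal{H}^{n-1}$-measure; a ball of radius $\sim\distb(F_1,F_2)$ carved out of $A_2\setminus A_1$ via the interior ball condition \eqref{eq: IBC}; then item 4 as the concatenation of items 1 and 3), and your two-case analysis in item 2 is if anything more careful than the text. In item 3 the mechanism is the paper's in spirit, but two steps need repair. First, the pointwise sandwich $0\le u_2-u_1\le\lambda_2-\lambda_1$ is \emph{not} what Theorem~\ref{th: monotonicity} says: that theorem gives only the lower bound $u_1\le u_2$. The upper bound is true, but it requires applying the monotonicity symmetrically on the target side (so that $\hat u^{\hat c}_1\le \hat u^{\hat c}_2$ on $\Lambda$) and then the representation $\hat u_2(x)=\max\{0,\ \sup_{y\in\overline\Lambda}(-c(x,y)+\lambda_2-\hat u_2^{\hat c}(y))\}$ with the normalization $\hat u_\lambda^{\hat c}(\hat\infty)=0$; none of this appears in your sketch, and attributing it to Theorem~\ref{th: monotonicity} as stated is incorrect. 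The paper avoids needing the upper bound altogether: for $y_2\in F_2^\Lambda$ with matched point $x_2\in\partial\Omega$ it has the \emph{equality} $u_2(x_2)=-c(x_2,y_2)+\lambda_2$ from \eqref{eq: u and m}, picks a collinear comparison point $y_1\in \overline{x_2y_2}\cap(F_1^\Lambda\cup\partial\Lambda)$, and the chain $-c(x_2,y_1)+\lambda_1\le u_1(x_2)\le u_2(x_2)$ plus the $\delta$-separation yields $|y_2-y_1|\lesssim\lambda_2-\lambda_1$ using only one-sided monotonicity. Second, in your case $x_0\in F_1$ you tacitly assume the nearest point of $F_2\cup\partial\Omega$ lies on $F_2$, at a $C^1$ point whose inward normal points toward its match; if the nearest point lies on $\partial\Omega$ (or on $\overline{F_2}\cap\partial\Omega$), your linear-growth ray is unavailable and the bound on $\min(\dist(x_0,F_2),\dist(x_0,\partial\Omega))$ is not established. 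The paper's segment through the matched pair terminates automatically either on the other free boundary or on the fixed boundary — this is precisely why $\distb$ incorporates $\partial\Omega$. Your case $x_0\in F_2$ has this automatic feature (since $u_1\equiv 0$ on the whole ball), but your case $x_0\in F_1$ does not.

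Item 5 contains the genuine gap. The decisive step in the paper is Lemma~\ref{lem: touching}: if $x_0\in F_1\cap F_2$ then $\nabla v_1(x_0)=\nabla v_2(x_0)$, proved from the $C^1$ regularity of the free boundaries, the fact that free boundary points are matched into $\partial\Lambda$ (\cite[Corollary 6.9]{Ca-Mc}), and \emph{strict convexity of} $\Lambda$. Your sketch never establishes $\nabla w(x_0)=0$ for $w=u_2-u_1$, and without it Hopf's lemma produces no contradiction: a nonnegative supersolution vanishing at a \emph{boundary} point of the region of ellipticity, with strictly positive inward normal derivative there, is perfectly consistent, and the strong maximum principle cannot force $w\equiv0$ from a boundary zero — so your conclusion ``$w\equiv0$ near $x_0$, hence $\lambda_1=\lambda_2$ by unique continuation'' does not follow. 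The paper instead derives $v_2>v_1$ in $A_1$ by the interior strong maximum principle (Lemma~\ref{lem: strong max}), applies Hopf (Lemma~\ref{lem: Hopf}) on an interior ball from \eqref{eq: IBC} at $x_0$, and the resulting strict normal-derivative inequality contradicts Lemma~\ref{lem: touching}. A second problem is your linearization under $g\in C^\alpha$ only: the difference of right-hand sides is then $O(|\nabla w|^\alpha)$, not a drift term $c\cdot\nabla w$ with bounded $c$, so the classical Hopf lemma and strong maximum principle do not apply; this is exactly why Section 8 of the paper works under Assumption~\ref{as: tech} ($f,g\in C^1$ and strictly convex smooth domains) rather than the $C^\alpha$ hypothesis appearing in the statement of assertion 5 (a discrepancy internal to the paper, but one your argument inherits rather than resolves).
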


\begin{proof}
 The assertion 1 and 2 will be shown in Section~\ref{S: mass and free}. The assertion 3 will be shown in Section~\ref{S: cost and free}. The assertion 4 follows  immediately from the assertions 1 and 2. Finally, the assertion 5 will be shown in Section~\ref{S: strict mono}. 
\end{proof}

An immediate and interesting consequence of Theorem~\ref{th: main} Assertion 4 together with \eqref{eq: m lambda}  is this:
\begin{corollary}\label{cor: Lip in lambda}
Use Assumption~\ref{as: main}. 
The cost function $C_\l$ given by
\[
C_\l(f,g)=\inf\limits_{\gamma\in\Gamma_\leq(f,g)}\int\limits_{\R^n\times\R^n}[c(x,y)-\l]d\gamma(x,y),
\]
is a $C^{1,1}$ function as a function of $\l$.
\end{corollary}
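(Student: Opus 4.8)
The plan is to read off the two defining features of a $C^{1,1}$ function---existence and continuity of the first derivative, together with Lipschitz continuity of that derivative---directly from quantities already controlled in the paper. The key identity is \eqref{eq: m lambda}, which tells us that the first $\lambda$-derivative of $C_\l(f,g)$ exists and is given by
\[
\frac{\partial}{\partial \l} C_\l(f,g) = -m(\l).
\]
Thus the entire corollary reduces to understanding the regularity of the single function $\l \mapsto m(\l)$.

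First I would record why this derivative formula holds everywhere and why the derivative is continuous, so that $C_\l \in C^1$. For each fixed admissible $\gamma \in \Gamma_{\le}(f,g)$ the quantity $\int_{\R^n\times\R^n}(c(x,y)-\l)\,d\gamma$ is affine in $\l$ with slope $-\mathbf{m}(\gamma)$; taking the infimum over $\gamma$ exhibits $C_\l(f,g)$ as a pointwise infimum of affine functions, hence a \emph{concave} function of $\l$. A concave function possesses one-sided derivatives at every point, and these agree precisely where the derivative is continuous. Since \eqref{eq: m lambda} identifies the derivative with $-m(\l)$, and $m(\l)$ is known to increase continuously from $0$ to $\min\{\|f\|_{L^1},\|g\|_{L^1}\}$, the one-sided derivatives coincide everywhere and vary continuously. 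This yields $C_\l(f,g)\in C^1$ with $C_\l'(f,g)=-m(\l)$.

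The remaining, and only substantive, step is to upgrade $C^1$ to $C^{1,1}$, which is exactly the statement that the derivative $-m(\l)$ is Lipschitz in $\l$. This is precisely Theorem~\ref{th: main} assertion 4: for $\l_1 < \l_2$ one has $m(\l_2)-m(\l_1)=m_2-m_1 \lesssim \l_2-\l_1$, and since $m$ is monotone this controls the full absolute value. Hence
\[
|(-m(\l_2)) - (-m(\l_1))| = |m(\l_2)-m(\l_1)| \lesssim |\l_2-\l_1|,
\]
which is the definition of Lipschitz continuity of $C_\l'(f,g)$, i.e. $C_\l(f,g) \in C^{1,1}$.

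The point to stress is that no new analysis is required here: all the difficulty has already been absorbed into the proof of Theorem~\ref{th: main} assertion 4, the Lipschitz bound on $m(\l)$. The only obstacle is therefore conceptual bookkeeping---verifying that $C^1$ regularity genuinely follows from the continuity of $m$ together with concavity (so that the derivative is two-sided, not merely one-sided), and recognizing that the Lipschitz estimate for $m$ is literally a Lipschitz bound on $C_\l'(f,g)$. Both are immediate once \eqref{eq: m lambda} and assertion 4 are in hand.
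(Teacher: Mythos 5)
Your proposal is correct and follows exactly the route the paper intends: the paper presents the corollary as an immediate consequence of Theorem~\ref{th: main} assertion 4 combined with the derivative identity \eqref{eq: m lambda}, with concavity of $\l \mapsto C_\l(f,g)$ (infimum of affine functions) noted in the accompanying remark. Your write-up merely fills in the bookkeeping the paper leaves implicit---that concavity plus continuity of $m(\l)$ upgrades the one-sided derivatives to a genuine continuous derivative, and that monotonicity of $m$ turns the one-sided estimate of assertion 4 into a two-sided Lipschitz bound---which is a faithful expansion of the same argument.
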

\begin{remark}
 Note as observed in \cite{Ca-Mc} that the function $\lambda \mapsto C_\l (f, g)$ is concave because it is an infimum of  a family of linear functions in $\l$. 
\end{remark}

Notice that the H\"older exponent $1/n$ in the assertion 2 in Theorem~\ref{th: main} is sharp, as seen the example when the target measure is given by (an approximation of) a Dirac mass, when $m$ is close to $0$. 

\begin{remark}[Dirac delta target]\label{rm: dirac}

Consider the case that $\mu$ is just the uniform density in $B_1(0)$ and $\nu$ is Dirac delta concentrated in $2e_n$. We note that the free boundary is always of the form
\[
F=(\cup_{x\in A} B_{r(x)}(x))\cap \Omega.
\]
Therefore, since the target region is just one point we conclude that the active region is given by $B_1(0)\cap B_r(x)$. We note from this example several remarkable properties.
\begin{itemize}
\item[(a)] The free boundary is strictly monotone.
\item[(b)] The separation of the free boundary can be controlled directly by the mass. More precisely we have that the free boundaries are withing a tubular neighborhood of size $m^{1/n}$. Let $0<m_1<m_2\leq1$, then the associated free boundaries for the partial optimal transport with masses $m_1$ and $m_2$ are given by
\[
\distb (F_1,F_2)\lesssim(m_2-m_1)^{1/n}.
\]
\end{itemize}

\end{remark}

However, we have the following conjecture:
\begin{conjecture}\label{conj: main}
 Use the same assumptions and notation as Theorem~\ref{th: main}. We conjecture the following reinforcement of Theorem~\ref{th: main}, which are all related to each other: 
\begin{enumerate}
  \item $m_2 - m_1  \sim \lambda_2-\lambda_1$ for  $ \frac{1}{3} \le  m_i \le \frac{2}{3}$;
  \item $\distb (F_1, F_2) \sim m_2 - m_1$ for $ \frac{1}{3} \le  m_i \le \frac{2}{3}$ ;

 \item For $ \frac{1}{3} \le  m_i \le \frac{2}{3}$,   
 there exists $\delta$ such that $\delta >0$ with $\delta \sim m_2 - m_1$ and  $N_\delta (F_1) \cap N_\delta (F_2) =\emptyset$.
\end{enumerate}
\end{conjecture}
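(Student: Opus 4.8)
The plan is to prove the three assertions by isolating a single new inequality that, together with the already-established parts of Theorem~\ref{th: main}, forces all three comparisons. Writing $\Delta m = m_2-m_1$, $\Delta\lambda = \lambda_2-\lambda_1$, and $\Delta F = \distb(F_1,F_2)$, Theorem~\ref{th: main} already supplies the chain $\Delta m\lesssim\Delta F\lesssim\Delta\lambda$ (assertions 1 and 3) and $\Delta m\lesssim\Delta\lambda$ (assertion 4). Hence it suffices to prove the one reverse estimate
\[
\Delta\lambda\;\lesssim\;\Delta m\qquad\text{for }\tfrac13\le m_i\le\tfrac23,
\]
since this closes the cycle into $\Delta m\sim\Delta F\sim\Delta\lambda$, which is precisely assertions 1 and 2 of the conjecture. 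The bulk restriction $\frac13\le m_i\le\frac23$ has to enter in an essential way: the Dirac example of Remark~\ref{rm: dirac} shows the free-boundary speed degenerates as $m\to0$, so any proof of $\Delta\lambda\lesssim\Delta m$ must invoke a uniform non-degeneracy available only away from the endpoints.

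First I would record a pointwise lower bound coming purely from the variational (sup) structure of the potentials. Fix a matched free-boundary pair $x_1\in F^\Omega_{\lambda_1}$, $y_1=T_{\lambda_1}(x_1)\in F^\Lambda_{\lambda_1}$ (Theorem~\ref{th: Ca-Mc}). Using $y_1$ as a competitor in the sup defining $\hat u_{\lambda_2}$, together with the normalization $\hat u_\lambda(\hat\infty)=0$, gives $u_2(x_1)\ge\lambda_2-c(x_1,y_1)-u_2^{\hat c}(y_1)$; meanwhile the optimality of $y_1$ for $\lambda_1$ and the vanishing $u_1(x_1)=0$, $u_1^{\hat c}(y_1)=0$ on the free boundaries (from \eqref{eq: u and m}) give $c(x_1,y_1)=\lambda_1$. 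Subtracting yields the key inequality
\[
u_2(x_1)+u_2^{\hat c}(y_1)\;\ge\;\Delta\lambda .
\]
The same competitor argument at interior points produces $w(x)+w^{\hat c}(T_{\lambda_1}x)\ge\Delta\lambda$ with $w=u_2-u_1\ge0$ and $w^{\hat c}=u_2^{\hat c}-u_1^{\hat c}\ge0$ (nonnegativity being the monotonicity of Theorem~\ref{th: monotonicity}), while the competitor $T_{\lambda_2}(x)$ gives the matching upper bound, so the sum is pinned at size $\Delta\lambda$.

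Next I would convert this into a volume statement. By saturation \eqref{eq: saturation} and the density bounds in Assumption~\ref{as: main}, $\Delta m\sim\vol(A^\Omega_2\setminus A^\Omega_1)\sim\vol(A^\Lambda_2\setminus A^\Lambda_1)$. Since $u_2=0$ on $F^\Omega_2$ and $|\nabla u_2|=|T_{\lambda_2}-\mathrm{id}|\lesssim\diam(\Omega\cup\Lambda)\lesssim1$, the outward normal displacement of $F^\Omega_2$ past a point $x_1\in F^\Omega_1$ is $\gtrsim u_2(x_1)$, and symmetrically on the target side; integrating over the free boundaries and matching the two integrals through $T_{\lambda_1}$ (whose surface Jacobian is bounded above and below from the $C^{1,\alpha}_{loc}$ regularity and $\det(I+D^2u_1)=f/g(T_1)\sim1$) gives
\[
\Delta m\;\gtrsim\;\int_{F^\Omega_1}\big[u_2(x_1)+u_2^{\hat c}(T_{\lambda_1}x_1)\big]\,d\mathcal{H}^{n-1}\;\gtrsim\;\Delta\lambda\cdot\mathcal{H}^{n-1}(F^\Omega_1).
\]
In the regime $\frac13\le m_i\le\frac23$ both $A^\Omega_1$ and $\Omega\setminus A^\Omega_1$ have volume $\sim1$, so the relative isoperimetric inequality in $\Omega$ forces $\mathcal{H}^{n-1}(F^\Omega_1)\gtrsim1$. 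This yields $\Delta\lambda\lesssim\Delta m$, hence assertions 1 and 2. All integrals are taken away from a fixed collar of $\partial\Omega$, which is exactly the role of $\distb$, since the $C^1_{loc}$ control of $F^\Omega_\lambda$ degenerates near $\partial\Omega$ and must be excised.

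For assertion 3 I would, assuming $f,g\in C^\alpha$ as in Theorem~\ref{th: main} assertion 5, linearize: $w=u_2-u_1\ge0$ solves a homogeneous uniformly elliptic equation $\mathcal{A}\!:\!D^2w+b\cdot\nabla w=0$ on the overlap $A^\Omega_1$ (the difference of the two Monge-Amp\`ere equations, with $\mathcal{A}\sim I$ and bounded $b$ from Caffarelli's interior regularity), and likewise for $w^{\hat c}$ on $A^\Lambda_1$. The goal is to upgrade the pointwise sum bound into a \emph{uniform} lower bound $u_2\gtrsim\Delta m$ on all of $F^\Omega_1$ (away from $\partial\Omega$), which gives $\dist(F_1,F_2)\gtrsim\Delta m$ pointwise and hence $N_\delta(F_1)\cap N_\delta(F_2)=\emptyset$ with $\delta\sim\Delta m$. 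The main obstacle is exactly here: the sum bound $u_2(x_1)+u_2^{\hat c}(y_1)\gtrsim\Delta\lambda$ permits the advance to concentrate entirely on the target side, so one must show that the source and target free boundaries move comparably \emph{at each matched pair}, not merely in total. I expect this to require coupling a quantitative strong maximum principle (Harnack) for $w$ and $w^{\hat c}$ with the local mass balance carried by $T_{\lambda_2}$ (bounded Jacobian) between $A^\Omega_2\setminus A^\Omega_1$ and $A^\Lambda_2\setminus A^\Lambda_1$; controlling this coupling uniformly in $\lambda$, and up to the free boundary rather than only on interior compacta, is the crux, and is the reason the statement remains conjectural.
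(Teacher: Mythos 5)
This statement is a \emph{conjecture}: the paper offers no proof of it, explicitly presents items (1)--(3) as open, and points to assertion 5 of Theorem~\ref{th: main} (the qualitative separation $F_1\cap F_2=\emptyset$, Theorem~\ref{th: strict free}) as the only partial progress toward item (3). So there is no paper proof to compare against, and your proposal must stand on its own --- which it does not, as you yourself concede for item (3). Your reduction of items (1)--(2) to the single missing inequality $\lambda_2-\lambda_1\lesssim m_2-m_1$ is correct, and the ``pinning'' observation --- that $w=u_2-u_1\ge 0$, $w^{\hat c}=u_2^{\hat c}-u_1^{\hat c}\ge 0$ satisfy $w(x)+w^{\hat c}(y)\ge\lambda_2-\lambda_1$ on $\spt\gamma_{\l_1}$ and $\le\lambda_2-\lambda_1$ on $\spt\gamma_{\l_2}$ --- is genuinely nice (it immediately gives $0\le u_2-u_1\le\lambda_2-\lambda_1$ on the active region, an alternative route to assertion 3 of Theorem~\ref{th: main}). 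But the argument you build on it has two concrete gaps.

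First, your ``matched free-boundary pair'' does not exist: by \cite[Corollary 6.9]{Ca-Mc}, exactly as invoked in the paper's proof of Proposition~\ref{prop: dist F1 F2}, the map $T_{\lambda_1}$ carries the source free boundary $F^\Omega_1$ into the \emph{fixed} boundary $\partial\Lambda$, not into $F^\Lambda_1$. Consequently your identities $u_1^{\hat c}(y_1)=0$ and $c(x_1,y_1)=\lambda_1$ are unjustified: the normalized conjugate potential is known to vanish only on the closure of the inactive region (that is where the coupling with $\hat\infty$ in \eqref{eq: match with infty} applies), whereas $y_1\in\partial\Lambda\cap\overline{A^\Lambda_1}$ may abut the active region, where $u_1^{\hat c}>0$ is possible; one only gets $c(x_1,y_1)\le\lambda_1$. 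Second, the surface-Jacobian matching of the two boundary integrals through $T_{\lambda_1}$ has no support: Theorem~\ref{th: Ca-Mc} gives only $T_{\lambda}\in C^\alpha_{loc}$, the interior bound $\det(I+D^2u_1)=f/g(T_1)\sim 1$ does not bound from below the tangential Jacobian of a restriction to a hypersurface, and no regularity up to the free boundary is available --- and since $T_{\lambda_1}(F^\Omega_1)\subset\partial\Lambda$, the intended transfer of the $u_2^{\hat c}$-integral onto $F^\Lambda_1$ is misdirected in the first place. This is fatal rather than cosmetic: the sum bound permits the advance to sit entirely on one side, a difficulty you acknowledge for item (3) but which equally undermines your integral inequality $m_2-m_1\gtrsim(\lambda_2-\lambda_1)\,\mathcal{H}^{n-1}(F^\Omega_1)$ for items (1)--(2). (A further technical hole: the relative isoperimetric inequality bounds all of $\mathcal{H}^{n-1}(F^\Omega_1)$, but your integrals were excised to the complement of a boundary collar, where the perimeter could in principle concentrate.) The net outcome is consistent with the paper's own assessment: the statement remains open, and your proposal, while containing a useful lemma, is not a proof of any of the three items.
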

This conjecture predicts that the movement of the free boundary and the growth of the mass are linear in $\lambda$. Here, the condition $ \frac{1}{3} \le  m_i \le \frac{2}{3}$ is required as it can be easily seen from the case where $\nu$ is close to a Dirac mass $\delta_{y_0}$ and when $m$ is close to $0$ or $1$: see Remark~\ref{rm: dirac}. The assertion 5 of Theorem~\ref{th: main} can be regarded as a partial result for  the item (3).

\section{Mass  $m$ and the free boundary: Proof of Theorem~\ref{th: main}, assertions 1 and 2}\label{S: mass and free}

We now start proving  the assertions in Theorem~\ref{th: main}.  In this section, we focus on the free boundary in the source domain $\Omega$. Because the assumptions we made are symmetrical for $\Omega$ and $\Lambda$, exactly the same proof shows the statement for the free boundary in $\Lambda$.

We first show the assertions 1 and 2. These  follow easily from the property \eqref{eq: IBC} and  simple geometric arguments.
 First, observe the following fact:

\begin{lemma}[Free boundary Lipschitz and semi-convex]\label{lem: F Lip and semi-convex}
Use Assumption~\ref{as: main}. 
 The free boundary $F$ is uniformly Lipschitz and semi-convex. The Lipschitz and semi-convexity constants depend only on $\Omega$ and $\Lambda$. 
In particular, $\mathcal{H}^{n-1} (F)$ is uniformly bounded. 
\end{lemma}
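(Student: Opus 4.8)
The plan is to read the geometry of the free boundary directly off the interior ball representation \eqref{eq: IBC}. Under Assumption~\ref{as: main} with $c(x,y)=\tfrac12|x-y|^2$, the condition $c(x,y_1)<c(x_1,y_1)$ is exactly $|x-y_1|<|x_1-y_1|$, so \eqref{eq: IBC} says that the active region is
\[
A^\Omega_\l=\Omega\cap\bigcup_{(x_1,y_1)\in\spt\gamma_\l}B_{r(x_1,y_1)}(y_1),\qquad r(x_1,y_1)=|x_1-y_1|,
\]
a union of open balls whose centres $y_1$ lie in $\overline{\Lambda}$ and whose radii lie in $[\delta,R]$, where $\delta$ is the separation constant of Assumption~\ref{as: main} and $R:=\diam(\Omega\cup\Lambda)$. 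The free boundary is $F=\partial A^\Omega_\l\cap\Omega$, the portion of the boundary of this union of balls lying inside $\Omega$. Everything will be extracted from the uniformity $\delta\le r\le R$ and the separation direction.

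First I would establish a \emph{uniform interior ball condition}: at every $x_0\in F$ there is a touching ball $B_\rho(y_*)\subseteq A^\Omega_\l$ with $x_0\in\partial B_\rho(y_*)$ and $\rho\ge\delta$. This is a soft compactness argument — approximate $x_0$ by points of the union, extract a convergent limit of the corresponding (centre, radius) pairs from the compact parameter set $\overline{\Lambda}\times[\delta,R]$, and check that the limiting ball lies in $A^\Omega_\l$ and touches at $x_0$ from inside. From this, semi-convexity is immediate: writing $F$ locally as a graph $x\cdot e=\psi(x')$ over the hyperplane $e^\perp$, the sphere $\partial B_\rho(y_*)$ is a concave graph $\eta$ with $D^2\eta\ge-\tfrac1\delta I$ that touches $\psi$ from below, yielding a supporting paraboloid $\psi(z')\ge\psi(x_0')+p\cdot(z'-x_0')-\tfrac{1}{2\delta}|z'-x_0'|^2$. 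Hence $\psi$ is semi-convex with constant $1/\delta$, depending only on the domains.

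Next I would control the slope to obtain the Lipschitz bound. The outward normal to $A^\Omega_\l$ at $x_0$ is $\nu=(x_0-y_*)/\rho$. Since $x_0\in\overline{\Omega}$ and $y_*\in\overline{\Lambda}$ sit on opposite sides of the separating hyperplane with normal direction $e$ (pointing from $\Lambda$ to $\Omega$), one has $(x_0-y_*)\cdot e\ge s_1-s_2$, where $s_1=\min_{\overline{\Omega}}x\cdot e$ and $s_2=\max_{\overline{\Lambda}}y\cdot e$; combined with $|x_0-y_*|\le R$ this gives $\nu\cdot e\ge\kappa:=(s_1-s_2)/R$ uniformly. A uniform lower bound $\nu\cdot e\ge\kappa>0$ means $F$ has no overhangs relative to $e$ and is a graph over $e^\perp$ with Lipschitz constant at most $\sqrt{1-\kappa^2}/\kappa$, again depending only on $\Omega,\Lambda$. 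Finally, a Lipschitz graph lying inside the bounded set $\Omega$ satisfies $\mathcal{H}^{n-1}(F)\le\sqrt{1+L^2}\,\mathcal{H}^{n-1}(\pi_{e^\perp}(\Omega))<\infty$ with a bound independent of $\lambda$.

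The step I expect to require the most care is the uniform slope bound, i.e. producing a strictly positive $\kappa$. This is precisely where the geometric separation in Assumption~\ref{as: main} must be used in an essential way: a mere separating hyperplane together with the distance gap $|x-y|\ge\delta$ does not by itself prevent $\overline{\Omega}$ and $\overline{\Lambda}$ from both meeting the hyperplane, in which case some normals could become tangent to $e^\perp$ and the uniform graph/Lipschitz description would degenerate near such points. I will therefore use that the separation yields a genuine slab $s_1>s_2$ (equivalently, that the supports are strictly separated in the direction $e$), which is what makes all the constants uniform in $\lambda$. The semi-convexity estimate, by contrast, needs only the lower radius bound $\delta$ and is robust.
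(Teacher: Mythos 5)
Your proof is correct and takes essentially the same route as the paper: the paper's entire proof of Lemma~\ref{lem: F Lip and semi-convex} is a one-line appeal to items 2 and 3 of Assumption~\ref{as: main} together with the interior ball property \eqref{eq: IBC}, and your three steps (touching ball by compactness, supporting paraboloid for semi-convexity, uniform cone condition on the normals $(x_0-y_*)/\rho$ for the Lipschitz graph) are precisely the details omitted there. The point you flag at the end is genuine, and is if anything a gap in the paper rather than in your argument: a separating hyperplane plus the gap $|x-y|\ge\delta$ on $\Omega\times\Lambda$ does not give a slab, since both closures may touch the hyperplane at horizontally distant locations, and then the only available lower bound $\nu\cdot e\ge \dist(x_0,H)/R$ degenerates as $x_0$ approaches the hyperplane; no $\lambda$-uniform Lipschitz constant comes out of the normal-cone argument in that case (and if $\Lambda$ wraps around so that touching balls arrive at a free boundary point from two nearly opposite, nearly hyperplane-tangent directions, the uniformity appears to fail in fact, not just in the proof). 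Reading the separation as strict separation of the compact closures --- the Caffarelli--McCann setting, and surely what is intended --- restores the slab by compactness, and then your argument is complete. One small quantitative slip that affects nothing: the hemisphere graph satisfies $D^2\eta\ge-\frac{1}{\delta}I$ only at its pole; near a touching point at angle $\arccos(\kappa)$ from the pole the correct bound is $D^2\eta\ge-\frac{C(\kappa)}{\delta}I$, which is still a constant depending only on $\Omega$ and $\Lambda$.
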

\begin{proof}
 It follows immediately from 
 Assumption~\ref{as: main} (especially,  the items 2 and 3) and the interior ball property \eqref{eq: IBC}. The uniform boundedness of $\mathcal{H}^{n-1}(F)$  is due to Lipschitz property of $F$ and boundedness of $\Omega$). 
\end{proof}

\begin{proof}[\bf Proof of Assertion 1 of Theorem~\ref{th: main}]
 From Assumption~\ref{as: main} item 4, to get upper bound on the difference $m_2-m_1$, we only need to estimate the volume difference between the two active regions $A_1$ and $A_2$. From the Lipschitz property of $F_1$ and $F_2$ due to Lemma~\ref{lem: F Lip and semi-convex}, the volume difference is estimated as
\begin{align*}
\vol (A_2 \setminus A_1) \lesssim  \left[ \mathcal{H}^{n-1} (F_1) + \mathcal{H}^{n-1} (F_2)\right] \distb (F_1, F_2)
\end{align*}
which then implies 
\begin{align*}
 m_2 - m_1 \lesssim \left[ \mathcal{H}^{n-1} (F_1) + \mathcal{H}^{n-1} (F_2)\right] \distb (F_1, F_2).
\end{align*}
From the uniform boundedness of $\mathcal{H}^{n-1}(F)$  (Lemma~\ref{lem: F Lip and semi-convex}), we get 
\begin{align*}
 m_2 - m_1 \lesssim \distb(F_1, F_2).
\end{align*}
This proves the assertion 1. 
\end{proof}

\begin{proof}[\bf Proof of Assertion 2 of Theorem~\ref{th: main}]

For the two free boundaries $F_1$ and $F_2$, let $x \in F_2$  be the point realizing the distance $d= \dist(F_1, F_2)$, i.e. $B_d(x) \cap F_1 =\emptyset$.  Thanks to the interior ball condition  \eqref{eq: IBC}, then there exists $y \in \Lambda_{A_2}$ such that $B_{\dist(x, y)} (x) \cap \Omega \subset A_2$.
This implies 
\begin{align*}
 B_d (x) \cap B_{\dist(x, y)} \cap \Omega \subset A_2 \setminus A_1.
\end{align*}
Note that 
\begin{align*}
\vol\left(  B_d (x) \cap B_{\dist(x, y)} \cap \Omega \right) \sim d^n, \quad \vol(A_2 \setminus A_1) \sim m_2 - m_1.
\end{align*}
Here we used Assumption~\ref{as: main}. 
This shows
\begin{align*}
 \distb(F_2, F_1) \lesssim (m_2-m_1)^{1/n}
\end{align*}
as desired. 
\end{proof}

\section{Monotonicity of the potential functions}\label{S: monotone potential}

 For the assertion 3 of Theorem~\ref{th: main}, we need monotonicity on the potential $ u_\lambda$  with respect to $\lambda$. 
 The goal of this section is to prove such monotonicity: 
 \begin{theorem}[Monotonicity]\label{th: monotonicity}
Use Assumption~\ref{as: main}. 
Let $u_{\l_1}$ and $u_{\l_2}$ be the normalized solutions of the partial optimal transport problem \eqref{eq: lambda problem} associated to $\l_1$ and $\l_2$, respectively, 
Then $u_{\l_1} \leq  u_{\l_2}$ in $\Omega$. 
\end{theorem}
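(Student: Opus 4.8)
The plan is to first pin down what the normalization $\hat u_\l(\hat\infty)=0$ actually gives, then reduce the claimed inequality to the common active region, then convert it into a one–sided comparison of the conjugate potentials, and finally to close the resulting coupled comparison by an extremal–point argument.

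First I would extract a clean consequence of the normalization. Since $\|f\|_{L^1}=\|g\|_{L^1}=1$, after transporting the amount $m(\l)$ the plan $\hat\gamma_\l$ still leaves mass $m(\l)>0$ at the point at infinity on both sides, so by \eqref{eq: match with infty} the diagonal pair $(\hat\infty,\hat\infty)$ lies in $\spt\hat\gamma_\l$. Evaluating the $\hat c$-subdifferential equality from Lemma~\ref{lem: unique} there gives $\hat u_\l(\hat\infty)+\hat u_\l^{\hat c}(\hat\infty)=-\hat c(\hat\infty,\hat\infty)=0$, hence $\hat u_\l^{\hat c}(\hat\infty)=0$ as well. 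Feeding this back through the $\hat c$-convex representation and \eqref{eq: saturation} yields $u_\l\ge 0$ on $\overline\Omega$ with $u_\l\equiv 0$ on $\overline\Omega\setminus A^\Omega_\l$, and symmetrically $v_\l:=\hat u_\l^{\hat c}\ge 0$ on $\overline\Lambda$ with $v_\l\equiv 0$ on $\overline\Lambda\setminus A^\Lambda_\l$. In particular the normalization is symmetric in source and target, so whatever I prove for $u_\l$ holds for $v_\l$ by the same argument. Because $A^\Omega_{\l_1}\subset A^\Omega_{\l_2}$ by \eqref{eq: active monotone}, on $\overline\Omega\setminus A^\Omega_{\l_1}$ we get $u_{\l_1}=0\le u_{\l_2}$ for free, so it suffices to prove $u_{\l_1}\le u_{\l_2}$ on the common active region $A^\Omega_{\l_1}$.

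Next I would use the representation $u_\l(x)=\max\{0,\sup_{y\in\overline\Lambda}[\l-c(x,y)-v_\l(y)]\}$ to reduce the inequality to an estimate on the conjugates. Write $\delta=\l_2-\l_1>0$. The $\hat c$-convexity inequality $u_{\l_2}(x)+v_{\l_2}(y)\ge \l_2-c(x,y)$ shows, term by term in the supremum defining $u_{\l_1}$, that $\l_1-c(x,y)-v_{\l_1}(y)\le u_{\l_2}(x)$ as soon as $v_{\l_2}(y)-v_{\l_1}(y)\le\delta$; taking the supremum then gives $u_{\l_1}\le u_{\l_2}$. Thus the theorem follows once the increment bound $v_{\l_2}-v_{\l_1}\le\delta$ holds on $A^\Lambda_{\l_1}$, and by the source–target symmetry this is coupled to the analogous bound $u_{\l_2}-u_{\l_1}\le\delta$ on $A^\Omega_{\l_1}$. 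This coupling is exactly the crux: each bound is equivalent to the monotonicity on the other side, so no single inequality can be peeled off in isolation.

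To close this I would argue at an extremum. If $w:=u_{\l_1}-u_{\l_2}$ had a positive maximum, then by the reduction it would be attained at an interior point $x_0\in A^\Omega_{\l_1}$ (on $F^\Omega_{\l_1}$ and on the inactive set one has $w=-u_{\l_2}\le 0$). At a point of differentiability that is an interior maximum, $\nabla u_{\l_1}(x_0)=\nabla u_{\l_2}(x_0)$, so by Theorem~\ref{th: Ca-Mc} the maps coincide, $T_{\l_1}(x_0)=T_{\l_2}(x_0)=:y_0\in A^\Lambda_{\l_1}$; inserting $y_0$ into the $\hat c$-subdifferential equality $u_{\l_i}(x_0)+v_{\l_i}(y_0)=\l_i-c(x_0,y_0)$ and subtracting gives $w(x_0)=-\delta+(v_{\l_2}(y_0)-v_{\l_1}(y_0))$, and the symmetric analysis shows $y_0$ is a maximum point of $v_{\l_2}-v_{\l_1}$ with matching source $x_0$. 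I expect this self–consistent first–order picture to be the main obstacle, since it is not ruled out by any first–order comparison. I would break it by passing to second order: as $T_{\l_1}(x_0)=T_{\l_2}(x_0)$ and by the saturation \eqref{eq: saturation} the two Monge--Amp\`ere equations share the right-hand side $f(x_0)/g(y_0)$ at $x_0$, while the maximum condition gives $D^2u_{\l_1}(x_0)\le D^2u_{\l_2}(x_0)$; equality of the determinants of the positive matrices $I+D^2u_{\l_i}(x_0)$ then forces $D^2u_{\l_1}(x_0)=D^2u_{\l_2}(x_0)$, and a strong–maximum–principle argument for the linearized operator propagates the maximum of $w$ to $F^\Omega_{\l_1}$, where $w\le 0$, a contradiction. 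Since that last step is where regularity enters, and since the present theorem assumes only Assumption~\ref{as: main}, I would alternatively close by a continuity (bootstrap) argument in $\l_2$: the bound $v_{\l_2}-v_{\l_1}\le\delta$ holds at $\l_2=\l_1$, the potentials depend continuously on the parameter, and the equivalence between monotonicity and the increment bound makes the set of admissible parameters both open and closed.
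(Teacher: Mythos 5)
Your preparatory work is sound: the normalization facts ($(\hat\infty,\hat\infty)\in\spt\hat\gamma_\l$, hence $u_\l\ge 0$ with $u_\l\equiv 0$ off $A^\Omega_\l$, and symmetrically for the conjugate), the reduction to the common active region, and the identification of the coupled first-order obstruction are all correct. Indeed, setting $a=\sup_\Omega(u_{\l_1}-u_{\l_2})$, $b=\sup_\Lambda(\hat u^{\hat c}_{\l_2}-\hat u^{\hat c}_{\l_1})$, $\delta=\l_2-\l_1$, one gets exactly your picture: if $a>0$ then $b=a+\delta$, and no soft comparison rules this out. The genuine gap is in how you close this loop. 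Your main closing step is second order: it needs $D^2u_{\l_i}(x_0)$ classically at the maximum point, equality of Monge--Amp\`ere right-hand sides there, and then a strong maximum principle for the linearized operator. None of this is available under Assumption~\ref{as: main} alone: $f,g$ are merely bounded measurable densities, Theorem~\ref{th: Ca-Mc} gives only $C^{1,\alpha}_{loc}$ potentials, second derivatives exist only a.e.\ (Alexandrov), and the linearized operator has no coefficient regularity or ellipticity control with which to run a strong maximum principle. This is precisely why the paper defers all maximum-principle machinery (Lemmata~\ref{lem: Hopf}, \ref{lem: strong max}, Theorem~\ref{th: strict free}) to Section~\ref{S: strict mono} under the \emph{additional} Assumption~\ref{as: tech} ($f,g\in C^1$, convex domains); Theorem~\ref{th: monotonicity} must hold without it, since it is itself an input to that later section.

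Your fallback---continuity in $\l_2$ plus an open/closed argument---does not repair this. Closedness is fine (granting a compactness-plus-uniqueness proof that $\l\mapsto u_\l$ is continuous, which you would also have to supply), but openness is exactly the original problem: the self-consistent obstruction $a>0$, $b=a+\delta$ can occur with $a$ arbitrarily small, so continuity near a parameter where monotonicity holds only tells you $a$ is small, never that it vanishes; your coupling identities provide no dichotomy of the form ``either $a=0$ or $a\ge c_0>0$'' that would convert smallness into vanishing. For contrast, the paper's proof avoids regularity entirely: writing $v_i=u_{\l_i}+\frac12|x|^2$, it assumes $M=\{v_1>v_2\}\neq\emptyset$, finds a transversal crossing point $p$ (lowering $v_1$ by $\epsilon$ if no transversal crossing exists), and applies Alexandrov's lemma (Lemma~\ref{lem: Alex}) to obtain the measure-theoretic contradiction
\begin{align*}
\mu\left((\nabla v_2)^{-1}(Y)\right)<\mu(M)\le\mu\left((\nabla v_1)^{-1}(Y)\right),\qquad Y=\partial v_1(M)\subset A^\Lambda_{\l_1}\subset A^\Lambda_{\l_2},
\end{align*}
which is impossible because both gradient maps push $\mu$ forward to $\nu$ over $Y$. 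Only a.e.\ differentiability of convex functions is used. As it stands, your argument proves the theorem under Assumptions~\ref{as: main} and \ref{as: tech} together; to obtain the stated result you need to replace the maximum-principle step by an argument of this measure-theoretic type.
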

For  the sake of a clearer exposition from now on we let 
\begin{align}\label{eq: v i}
  v_i (x) = u_{\lambda_i} (x) + \frac 1 2 |x|^2. 
\end{align}
An important observation is that $v_i$'s are convex for $c$-convex functions $u_i$'s (here $c(x, y) = \frac 1 2 |x-y|^2$). Moreover, $\partial v_i (x) = \partial^c u_{\lambda_1} (x)$, thus we can regard $\nabla v_i$ as the transport map for each partial transport problem \eqref{eq: lambda problem} with $\l_1, \l_2$, respectively. Here, each $\nabla v_i$, $i=1, 2$, is viewed as a measurable mapping  defined a.e. on $\R^n$.  
Now, let us recall a version of the Aleksandrov Lemma as stated in \cite{McC95}.
\begin{lemma}[Alexandrov's lemma]\label{lem: Alex}
Let $v_1, v_2 : \R^n \to \R$ be convex functions, differentiable at a point $p$ with $v_1(p)=v_2(p)$, and $\nabla v_1 (p)\ne \nabla v_2 (p)$. Define $M=\{v_1>v_2 \}$ and $X=\{\nabla v_2^{-1}(\partial v_1(M))\}$. Then $X\subset M$, while $p$ lies at a positive distance from $X$.
\end{lemma}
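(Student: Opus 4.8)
The plan is to prove the two assertions separately: the inclusion $X\subseteq M$, and the separation statement that $p$ lies at positive distance from $X$ (equivalently, $p\notin\overline X$, i.e. some ball $B_\rho(p)$ misses $X$). Throughout I write $q_1:=\nabla v_1(p)$ and $q_2:=\nabla v_2(p)$, so that by hypothesis $q_1\ne q_2$ and $v_1(p)=v_2(p)$, and I use that $x\in X$ means there exist $x'\in M$ and a vector $q$ with $q\in\partial v_1(x')$ and $q\in\partial v_2(x)$ (reading $\nabla v_2^{-1}$ as the subdifferential preimage). The first inclusion is the easy part and is pure monotonicity of the subdifferential: for $x\in X$ with $x',q$ as above, the subgradient inequality $q\in\partial v_1(x')$ evaluated at $x$ gives $v_1(x)\ge v_1(x')+q\cdot(x-x')$, while $q\in\partial v_2(x)$ evaluated at $x'$ gives $v_2(x)\le v_2(x')+q\cdot(x-x')$. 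Subtracting, the terms $q\cdot(x-x')$ cancel and I obtain $v_1(x)-v_2(x)\ge v_1(x')-v_2(x')>0$, since $x'\in M$. Hence $x\in M$, proving $X\subseteq M$.

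For the separation I would argue by contradiction: suppose $p\in\overline X$ and choose $x_k\in X$ with $x_k\to p$, together with $x_k'\in M$ and $q_k\in\partial v_2(x_k)\cap\partial v_1(x_k')$. Two facts drive the argument. First, since $v_2$ is convex and differentiable at $p$, its subdifferential is locally bounded with closed graph, so every selection $q_k\in\partial v_2(x_k)$ with $x_k\to p$ satisfies $q_k\to q_2$. Second, the computation of the previous paragraph applied to $x_k\in X$ yields $0<v_1(x_k')-v_2(x_k')\le v_1(x_k)-v_2(x_k)$, and the right-hand side tends to $v_1(p)-v_2(p)=0$ by continuity; hence the gap $v_1(x_k')-v_2(x_k')\to 0$.

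The conclusion then comes from a limit together with a two-sided supporting-hyperplane identity. Passing to a convergent subsequence $x_k'\to x_\infty'$, the closedness of the graph of $\partial v_1$ gives $q_2\in\partial v_1(x_\infty')$, while the vanishing gap gives $v_1(x_\infty')=v_2(x_\infty')$. Now I add the two subgradient inequalities $v_1(p)\ge v_1(x_\infty')+q_2\cdot(p-x_\infty')$ (from $q_2\in\partial v_1(x_\infty')$) and $v_2(x_\infty')\ge v_2(p)+q_2\cdot(x_\infty'-p)$ (from $q_2=\nabla v_2(p)\in\partial v_2(p)$); the cross terms cancel and, using $v_1(p)=v_2(p)$ and $v_1(x_\infty')=v_2(x_\infty')$, both inequalities must be equalities. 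Equality in the first says the affine function $z\mapsto v_1(x_\infty')+q_2\cdot(z-x_\infty')$ supports $v_1$ at $p$, so $q_2\in\partial v_1(p)=\{q_1\}$ by differentiability of $v_1$ at $p$; thus $q_1=q_2$, contradicting the hypothesis. This gives $B_\rho(p)\cap X=\emptyset$ for some $\rho>0$, which is the assertion.

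The hard part will be the single compactness step: justifying that the preimage sequence $\{x_k'\}\subset M$ is precompact rather than escaping to infinity. I expect to dispatch this by the \emph{properness} of the gradient maps. In the present paper one has $v_i=u_{\lambda_i}+\tfrac12|x|^2$, which grows quadratically, so $\nabla v_i$ is proper and the constraints that $q_k$ is bounded (because $q_k\to q_2$) and that the gap tends to $0$ confine $x_k'$ to a bounded set; alternatively one simply restricts attention to the bounded domain $\Omega$ on which the transport is supported. This is the only place where more than elementary convexity is used, and it is exactly the step that would require additional hypotheses (or the above growth/boundedness structure) in a fully general statement.
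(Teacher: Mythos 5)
Your proof of the inclusion $X\subseteq M$ is correct and is exactly the standard monotonicity argument; your derivation that the gap $v_1(x_k')-v_2(x_k')\to 0$ and your two-sided supporting-hyperplane computation at the limit point are also sound. The genuine gap is the one you flagged yourself: the precompactness of the witness sequence $\{x_k'\}\subset M$. The lemma is stated for \emph{arbitrary} convex functions $v_1,v_2:\R^n\to\R$, with no growth hypothesis and no boundedness of $M$, so your properness argument (which needs $v_1\ge \tfrac12|x|^2+C$, say) imports hypotheses that are not in the statement; as written you prove only a weaker lemma. For the paper's application the fallback does work, and in fact more simply than properness: in the proof of Theorem~\ref{th: monotonicity} the set $M=\{v_1>v_2\}$ is contained in the active region $A^\Omega_1\subset\Omega$, which is bounded, so $x_k'\in M$ is automatically precompact. (Note the paper itself does not prove this lemma; it quotes it from \cite{McC95}, so the relevant comparison is with McCann's proof.)

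The compactness step can be eliminated entirely, and this is essentially what McCann does: pass to the limit in the supporting \emph{affine functions} rather than in the points $x_k'$. With $q_k\in\partial v_1(x_k')\cap\partial v_2(x_k)$, $x_k'\in M$, $x_k\to p$, set $\ell_k(z)=v_1(x_k')+q_k\cdot(z-x_k')$, so $\ell_k\le v_1$ everywhere. Using $v_1(x_k')>v_2(x_k')$ and the supporting plane of $v_2$ at $x_k$,
\begin{align*}
v_1(p)\;\ge\;\ell_k(p)\;>\;v_2(x_k')+q_k\cdot(p-x_k')\;\ge\;v_2(x_k)+q_k\cdot(x_k'-x_k)+q_k\cdot(p-x_k')\;=\;v_2(x_k)+q_k\cdot(p-x_k),
\end{align*}
and the right-hand side tends to $v_2(p)=v_1(p)$ since $x_k\to p$ and $q_k$ is bounded. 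Hence $\ell_k(p)\to v_1(p)$ while $q_k\to q_2$ (your upper semicontinuity step), so for every fixed $z$, $v_1(z)\ge \ell_k(z)=\ell_k(p)+q_k\cdot(z-p)\to v_1(p)+q_2\cdot(z-p)$; thus $q_2\in\partial v_1(p)=\{q_1\}$, contradicting $q_1\ne q_2$. No information about the location of $x_k'$ is used, so this closes your gap and proves the lemma in its stated generality; your skeleton is otherwise the same argument anchored at the (unnecessary) limit point $x_\infty'$.
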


We use this lemma to show Theorem~\ref{th: monotonicity}:
\begin{proof}[\bf Proof of Theorem~\ref{th: monotonicity}]
Recall the definition of the convex functions $v_i$ in \eqref{eq: v i}  and note that it suffices to show $v_1 \le v_2$ in $\Omega$. 

We will proceed by contradiction. Suppose that $v_1$ and $v_2$ cross in $\Omega$, i.e. $M=\{ v_1 > v_2\}\ne \emptyset$  . We first assume  that they cross in a transversal fashion, namely, there is a point $p \in \Omega$ such that $v_1(p)=v_2(p)$ and $v_1, v_2$ are differentiable at $p$ with  $\nabla v_1(p)\neq \nabla v_2(p)$. 
 We note that $p\in\partial M$, $M$ is a subset of the active region of $A_1$ for $\lambda_1$.  Let $Y=\partial  v_1 (M)$.
Due to Lemma~\ref{lem: Alex} we know there is an open neighborhood of $p$, $\mathcal{N}_p\subset\Omega$ such that $\nabla v_2^{-1}(Y)\subset M $ and excludes $\mathcal{N}_p$. This implies 
\[
\mu((\nabla v_2)^{-1}(Y))<\mu(M)\leq \mu((\nabla v_1)^{-1}(Y)).
\]
The first inequality comes from the fact that $\mu(M\cap\mathcal{N}_p)>0$ and the neighborhood $\mathcal{N}_p$  is not included in $\nabla v_2^{-1}(Y)$. The second inequality follows from the fact that $v_1$ is differentiable a.e., so $\mu (M \setminus (\nabla v_1)^{-1}(Y) )=0$.  Since $Y\subset A^\Lambda_1\subset A^\Lambda_2$ we get a contradiction,  as both $\nabla v_1$ and $\nabla v_2$ push forward $\mu$ into $\nu$ in that region $Y$. 

In the case that $v_1$ and $v_2$ cross in $\Omega$,  but does not  intersect transversally in the interior, we proceed as follows. 
Lower the function $v_1$ down a bit, i.e. consider $v_1^\epsilon=v_1 - \epsilon$ for each small $\epsilon>0$  (here, $v_1^\epsilon$ is not normalized anymore) and note that the active region and the mass moved by $\nabla v_1^\epsilon$ is the same as for $v_1$, because they depend only on $\nabla v_1$.  Since the crossing was in the interior, then for small $\epsilon$, $v_1^\epsilon$ and $v_2$ will still  cross in the interior of $\Omega$. Consider the family $\{v_1^\epsilon\}_{\epsilon >0}$. First, notice that  
\begin{align*}
 \cup_{\epsilon>0}  \{ v_1^\epsilon = v_2\} \cap \{ v_1 > v_2 \} =  \{ v_1 > v_2 \}. 
 \end{align*}
Suppose each set $\{ v_1^\epsilon = v_2\}\cap \{ v_1 > v_2 \} $   has no point where the crossing between $v_1^\epsilon$ and $v_2$ is transversal. Then, it implies that  $\nabla v_1 = \nabla  v_2$ a.e. in $\{ v_1 > v_2\}$. This means $v_1 = v_2$ in $\{ v_1 > v_2\}$, an obvious contradiction. So, for some $\epsilon>0$, $v_1^\epsilon$ and $v_2$ cross transversally in the interior of $\Omega$. 
Finally, we can apply the same reasoning as before, to $v_1^\epsilon$ and $v_2$, to derive a contradiction.
\end{proof}

\section{Cost cap $\lambda$ and the free boundary: Proof of Theorem~\ref{th: main}, assertions 3}\label{S: cost and free}

In this section, we prove the following proposition, which verifies the assertion 3 in Theorem~\ref{th: main}.
\begin{proposition}\label{prop: dist F1 F2}
Use Assumption~\ref{as: main}. 
Recall  the free boundaries $F_i^\Lambda$  to the solutions of  \eqref{minprob}, with $\l_1<\l_2$, respectively.  Then
\begin{align}\label{eq: F2 F1}
 &F^\Lambda_2 \subset N_{C(\lambda_2-\lambda_1)} (F_1 \cup \partial \Lambda),\\\nonumber
 &F^\Lambda_1 \subset N_{C(\lambda_2-\lambda_1)} (F_2 \cup \partial \Lambda),
\end{align}
for some constant $C$, depending only on $\Omega, \Lambda$. Note that this implies 
\begin{align*}
 \distb(F^\Lambda_1, F^\Lambda_2) \lesssim \lambda_2 -\lambda_1.
\end{align*}
Since Assumption~\ref{as: main} is symmetrical, we also have 
\begin{align*}
 \distb(F^\Omega_1, F^\Omega_2) \lesssim \lambda_2 -\lambda_1.
\end{align*}
\end{proposition}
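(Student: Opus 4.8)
The plan is to transfer the monotonicity of the source potential (Theorem~\ref{th: monotonicity}) into a Lipschitz comparison for the conjugate potential on the target, and then convert that comparison into a free boundary estimate by means of the interior ball condition \eqref{eq: IBC}. Throughout, write $w_i(y):=\hat u^{\hat c}_{\l_i}(y)$ for the conjugate potentials. Recall that $w_i\ge 0$ everywhere (test the defining supremum with $x=\hat\8$), that $w_i=0$ on the inactive set $\L\setminus A^\L_i$, hence on $F^\L_i$, and that $w_i$ admits the variational formula
\begin{align}\label{eq: w formula}
 w_i(y)=\max\Big(0,\ \sup_{x\in\overline\W}\big[\l_i-c(x,y)-u_i(x)\big]\Big),
\end{align}
coming directly from the definition of the $\hat c$-conjugate and $\hat c(x,y)=c(x,y)-\l_i$ on $\R^n$. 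The proof will rest on two estimates: an upper comparison coming from monotonicity, and a lower radial-growth estimate coming from the interior ball condition.

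First I would establish the pointwise upper comparison
\begin{align}\label{eq: w comparison}
 w_2(y)\le w_1(y)+(\l_2-\l_1)\qquad\text{for all }y.
\end{align}
This is where Theorem~\ref{th: monotonicity} enters: since $u_1\le u_2$ on $\overline\W$, for each fixed $x$ one has $\l_2-c(x,y)-u_2(x)\le (\l_2-\l_1)+\big[\l_1-c(x,y)-u_1(x)\big]$; taking the supremum over $x\in\overline\W$ and using that this supremum is dominated by $w_1(y)$ through \eqref{eq: w formula}, the bound \eqref{eq: w comparison} follows after applying $\max(0,\cdot)$ and $w_1\ge0$. In particular, wherever $w_1$ vanishes we get $w_2\le\l_2-\l_1$. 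I emphasize that I will only need \eqref{eq: w comparison} together with the active-region monotonicity \eqref{eq: active monotone}, so no separate symmetric monotonicity statement is required.

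The second ingredient is that $w_i$ grows at least linearly as one moves inward from the free boundary along an interior ball. Fix $y^*\in F^\L_i$; by \eqref{eq: IBC} there is a matched pair $(x^*,y_1)\in\spt\g_{\l_i}$ whose ball $B_R(x^*)\cap\L\subset A^\L_i$ has $y^*$ on its boundary sphere, with radius $R=|x^*-y^*|\ge\d$ by the separation in Assumption~\ref{as: main}. From the supporting inequality $w_i(y)\ge \l_i-c(x^*,y)-u_i(x^*)$ (equality at $y_1$), eliminating $u_i(x^*)$ and using $w_i\ge0$ gives $w_i(y)\ge\tfrac12\big(R^2-|y-x^*|^2\big)$, so for $y$ at depth $s$ on the inward normal ray from $y^*$ one has $w_i(y)\ge\tfrac12(2Rs-s^2)\ge\tfrac12\d s$ for $0\le s\le R$. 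The two inclusions in \eqref{eq: F2 F1} now follow by combination. For $F^\L_2\subset N_{C(\l_2-\l_1)}(F_1\cup\partial\L)$, take $y_0\in F^\L_2$ and $d=\dist(y_0,F^\L_1\cup\partial\L)$; by \eqref{eq: active monotone} $y_0\notin A^\L_1$ and $B_d(y_0)$ avoids $F^\L_1\cup\partial\L$, so $w_1\equiv0$ on $B_d(y_0)$, while moving inward by $s=d/2$ along the interior ball of $A^\L_2$ produces $y_{d/2}\in B_d(y_0)$ with $w_2(y_{d/2})\ge\tfrac14\d\,d$; then \eqref{eq: w comparison} forces $\tfrac14\d\,d\le\l_2-\l_1$. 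For the reverse inclusion, take $y_0\in F^\L_1$ and $d=\dist(y_0,F^\L_2\cup\partial\L)$; here $w_1(y_0)=0$, so \eqref{eq: w comparison} gives $w_2(y_0)\le\l_2-\l_1$, while $y_0$ sits at depth $d$ inside $A^\L_2$ by \eqref{eq: active monotone}, so the radial-growth estimate applied at the nearest point of $F^\L_2$ yields $w_2(y_0)\gtrsim\d\,d$; again $d\lesssim\l_2-\l_1$. The statement for $\W$ follows from the symmetry of Assumption~\ref{as: main}.

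The step I expect to be the main obstacle is the lower radial-growth estimate, and in particular the geometric alignment needed for the reverse inclusion: one must know that the nearest free boundary point $y^\flat\in F^\L_2$ to $y_0$ carries an interior ball whose inward normal passes through $y_0$ at depth exactly $d$, which uses the $C^1_{loc}$ regularity of the free boundary (Theorem~\ref{th: Ca-Mc}) and the fact that, in the regime where $\l_2-\l_1$ is small, $d$ is smaller than the uniformly positive interior ball radius $\ge\d$; the complementary regime $\l_2-\l_1\gtrsim1$ is immediate since $d\le\diam\L$. A secondary point is the careful justification of \eqref{eq: w formula} and of the sign and vanishing of $w_i$, both of which follow from the normalization $\hat u_{\l_i}(\hat\8)=0$ and the structure of $\hat c$.
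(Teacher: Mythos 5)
Your route is genuinely different from the paper's. The paper works entirely with the source--side potentials: for $y_2\in F^\L_2$ with matched point $x_2\in\partial\W$, it places a third point $y_1\in (F^\L_1\cup\partial\L)\setminus A^\L_1$ on the segment $\overline{x_2y_2}$, combines the equality/inequality cases of \eqref{eq: u and m} with the monotonicity $u_1\le u_2$ of Theorem~\ref{th: monotonicity} to get $c(x_2,y_2)-c(x_2,y_1)\le\l_2-\l_1$, and finishes by collinearity plus the $\delta$-separation. You instead transfer the monotonicity to the target side as a quantitative comparison $w_2\le w_1+(\l_2-\l_1)$ for the conjugate potentials, and convert it into a boundary estimate via a radial growth lemma coming from the interior ball condition \eqref{eq: IBC}. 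Your comparison inequality and growth lemma are both correct, and your proof of the \emph{first} inclusion of \eqref{eq: F2 F1} is complete and arguably more robust than the paper's: there is no case analysis on where the segment meets $F^\L_1$, and the depth restriction is harmless because you may evaluate both bounds at an auxiliary point at depth $\min(d/2,\delta)$, which still lies in the ball $B_d(y_0)$ where $w_1\equiv 0$.

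The reverse inclusion, however, has a genuine gap, and the difficulty you flagged is fatal as written, for two reasons. First, the nearest point $y^\flat$ of $F^\L_2\cup\partial\L$ to $y_0\in F^\L_1$ may lie on $\partial\L$ rather than on $F^\L_2$. At such a point the ball furnished by \eqref{eq: IBC} need not be tangent to $\partial A^\L_2$ there (the active region may stop because $\L$ stops, not because the ball does), its center points toward $\W$ rather than along $y_0-y^\flat$, and $w_2$ need \emph{not} vanish there, so no contradiction with $w_2\le w_1+(\l_2-\l_1)$ can be extracted. Vanishing of the conjugate really only holds on the closure of the inactive region \emph{inside} $\L$: for instance with $\W=(-2,-1)$, $\L=(1,2)$, $f=g=1$, one computes $u_\l(x)=(2+m)(x+1+m)$ on the active source interval and $w_\l(y)=(2+m)(1+m-y)$ on the active target interval $(1,1+m)$, where $m=m(\l)$; then $w_\l(1)=m(2+m)>0$ even though $1\in\partial\L$ lies outside $A^\L_\l$. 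Second, even when $y^\flat\in F^\L_2$, your growth estimate reaches $y_0$ only if $d\le R$, and you only know $R\ge\delta$. Dismissing the complementary regime as ``$\l_2-\l_1\gtrsim 1$, hence immediate'' is circular: the complementary regime is defined by $d>\delta$, and the implication ``$d>\delta$ forces $\l_2-\l_1\gtrsim 1$'' is precisely a weak form of the statement being proved. The fix that works in the forward direction does not transfer: there the auxiliary point at depth $\min(d/2,\delta)$ stays inside $B_d(y_0)$, where $w_1\equiv0$ gives the upper bound; here the analogous point at depth $\delta$ from $y^\flat$ lies between $y^\flat$ and $y_0$, possibly inside $A^\L_1$, where $w_1>0$ and you have no upper bound on $w_2$.

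For calibration, this second inclusion is also the point where the paper is tersest: it asserts it ``by interchanging the role of $F^\L_2$ and $F^\L_1$, the exactly same method.'' Note that the literal interchange meets a sign obstruction --- the interchanged chain needs an \emph{upper} bound on $u_2$ at the matched source point, i.e.\ the complementary inequality $u_2\le u_1+(\l_2-\l_1)$, which incidentally is exactly what your comparison argument yields when run on the source side using the symmetric form $w_1\le w_2$ of Theorem~\ref{th: monotonicity}. So your functional inequality is the right tool for making that direction precise; what is missing, in your write-up, is the geometric step producing a single point at which $w_2$ is simultaneously bounded above by $\l_2-\l_1$ and below by a multiple of the distance you are trying to estimate, uniformly over the position of $y_0$ relative to $\partial\L$ and over the size of $d$.
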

\begin{proof} 
Let $u_1$ and $u_2$ be the normalized potential functions to the solutions of  \eqref{minprob}, with $\l_1<\l_2$ respectively.

We begin by showing the first inclusion in  \eqref{eq: F2 F1}.
Consider an arbitrary point $y_2 \in F_2^\Lambda$. 
It has a corresponding point $x_2 \in {\rm cl} \Omega$ (in fact in $\partial \Omega$ due to \cite[Corollary 6.9]{Ca-Mc}), such that $y_2 \in \partial^{\hat c}  u_2 (x_2)$.  Consider the line $\overline{x_2 y_2}$, and then either $\overline{x_2 y_2} \cap F_1^\Lambda =\emptyset$ or there exists $y_1 \in \overline{x_2y_2}\cap F_1^\Lambda$. In the first case, there is a point in  $ \overline{x_2 y_2} \cap \partial \Lambda \cap \left(\R^n \setminus A^\Lambda_1\right)$, and we denote it also by $y_1$. Notice that the functions $m_i(x)=-\frac 1 2  |x-y_i|^2+\l_i,$ $i=1,2$, satisfy $m_i \le u_i$, and $m_2(x_2) = u_2(x_2)$, since we can apply \eqref{eq: u and m} to $y_2 \in \partial^{\hat c} u_2 (x_2)$ and $y_i \in \subset \R^n \setminus A^\Lambda_i$. 
Therefore, noting that $u_1 \le u_2$ in $\Omega$ from Theorem~\ref{th: monotonicity},
$
 -c(x_2,y_2)+\l_2 \ge -c(x_2,y_1)+\l_1,
$
which gives us
\[
c(x_2,y_2)-c(x_2,y_1)\le \l_2-\l_1.
\]
Since $c(x,y) = \frac 1 2  |x-y|^2$ and $\Omega$ and $\Lambda$ are  separated (3 of Assumption~\ref{as: main}), using the fact the points $x_2, y_1, y_2$ are on the line $\overline{x_2 y_2}$, 
we have then $$c(x_2,y_2)-c(x_2,y_1)\gtrsim  |y_2-y_1|,$$ proving the desired result  $F^\Lambda_2 \subset N_{C(\lambda_2-\lambda_1)} (F^\Lambda_1 \cup \partial \Omega)$.

By interchanging the role of $F^\Lambda_2$ and $F^\Lambda_1$, the exactly same method shows the other inclusion $F^\Lambda_1 \subset N_{C(\lambda_2-\lambda_1)} (F^\Lambda_2 \cup \partial \Omega)$, completing the proof. 
\end{proof}

\section{Strict monotonicity of free boundaries: Proof of Theorem~\ref{th: main}, assertion 5}\label{S: strict mono}

A desired result on the free boundary movement is to get a quantitative separation of free boundaries, showing points in two different free boundaries corresponding two different values of $m$ or $\lambda$, are away from each other by a positive distance whose lower bound is controlled by $m$ or $\lambda$. As a partial progress toward this direction, we show in this last section, the free boundaries indeed do separate from each other, but without a quantitative estimate,  which also establishes the strictness of the monotonicity of the active regions. 
To do this we require the following technical assumptions, which are mainly for using the established regularity theory of the optimal partial transport \cite{Ca-Mc}:

\begin{assumption}[Additional assumptions for strict monotonicity]\label{as: tech} \ \ \ \
\begin{enumerate}[1.]
 \item {\bf (smooth and convex domains)} $\Omega$, $\Lambda$ are strictly convex and bounded domains with smooth boundaries. 
 \item  {\bf (smooth densities)}  
 The functions $ f, g \in C^1$.
 \end{enumerate}
\end{assumption}

For $f, g \in C_{loc}^\alpha$, consider $v(x) = u(x) + \frac 1 2 |x|^2$, where $u$ is the potential function to the solution of  the partial transport problem \eqref{minprob}. Then, $v$ satisfies in the interior of the active region, $\det(D^2 v ) = f(x) / g(\nabla v )$,and $v$ (thus $u$)  is $C_{loc}^{2,\alpha}$; this interior $C^2$ regularity is due to Caffarelli \cite{Caf90-W2}. The reason we require a further regularity $C^1$ is to apply the Hopf lemma (Lemma~\ref{lem: Hopf}) and the strong maximum principle  (Lemma~\ref{lem: strong max}) shown below using the $C^1$ assumption on $f$ and $g$.

\subsection{Hopf's lemma and the strong maximum principle}
In this subsection for reader's convenience, we give a short proof of a Hopf Lemma and strong Maximum Principle for the Monge-Amp\`ere equation
\begin{align}\label{eq: MA}
 \det(D^2 v (x))=F(x, \nabla v (x)),
\end{align} 
which should be known to experts. We assume that $F$ is $C^1$, which for our partial transport problem correspond to the case where the densities $f$,$g$ of the measures $\mu$, $\nu$, respectively, are $C^1$, and $g$ is bounded from below. 

\begin{lemma}[Hopf lemma for Monge-Amp\`ere]\label{lem: Hopf}
Let $B$ be an open ball and 
 $v_1, v_2\in C^2(B)\cap C^1(\overline{B})$ be uniformly convex solutions of \eqref{eq: MA} in $B$, where $F$ is a differentiable function. Suppose $v_2> v_1$ on $B$ and there is a point $x_0 \in \partial B$ such that $v_1(x_0)=v_2(x_0)$. Then, we have the strict inequality
\[
\frac{\partial v_2}{\partial \vec n} (x_0)> \frac{\partial v_1}{\partial \vec n}(x_0).
\]
where  $\vec n$ is the inner normal vector of $\partial B$ and $x_0$. 
\end{lemma}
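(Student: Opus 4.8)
The plan is to reduce the statement to the classical Hopf boundary-point lemma for a uniformly elliptic \emph{linear} operator, obtained by linearizing \eqref{eq: MA} along the segment joining the two solutions. Set $w = v_2 - v_1$, so that $w>0$ in $B$, $w(x_0)=0$, and $w\in C^2(B)\cap C^1(\overline{B})$. Writing $v_t = t v_2 + (1-t) v_1$ for $t\in[0,1]$ and using the elementary identity $\det A - \det B = \int_0^1 \mathrm{cof}(tA+(1-t)B)\,dt : (A-B)$ applied to the Hessians, together with the mean-value form $F(x,\nabla v_2)-F(x,\nabla v_1)=\int_0^1 \nabla_p F(x,\nabla v_t)\,dt\cdot(\nabla v_2-\nabla v_1)$, I subtract the two copies of \eqref{eq: MA} to obtain
\begin{equation*}
 a^{ij}(x)\,\partial_{ij} w(x) - b^i(x)\,\partial_i w(x) = 0 \quad \text{in } B,
\end{equation*}
where $a^{ij}(x)=\int_0^1 \mathrm{cof}_{ij}(D^2 v_t(x))\,dt$ and $b^i(x)=\int_0^1 \partial_{p_i}F(x,\nabla v_t(x))\,dt$. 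The crucial structural feature is that this linear equation carries \emph{no} zeroth-order term, which is exactly what lets the Hopf lemma apply with no sign condition to check.

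The key point to verify is that $L := a^{ij}\partial_{ij} - b^i\partial_i$ is uniformly elliptic with bounded coefficients on $B$. Since $v_1,v_2$ are uniformly convex, each $D^2 v_t(x)\geq \lambda_0 I$ for a fixed $\lambda_0>0$, and from \eqref{eq: MA} its determinant equals $F(x,\nabla v_t)$, which is bounded above and below by positive constants because $F$ is continuous and the gradients $\nabla v_i$ are bounded on $\overline{B}$ (here the lower bound on $g$, i.e.\ on $F$, enters). The eigenvalues of $\mathrm{cof}(D^2 v_t)$ are the products $\det(D^2 v_t)/\mu_k$ over the remaining eigenvalues $\mu_k\geq\lambda_0$ of $D^2 v_t$; since $\det D^2 v_t$ is pinched between two positive constants and each $\mu_k$ is thereby also controlled from above (being at most $\det D^2 v_t/\lambda_0^{n-1}$), these products lie in a fixed compact subinterval of $(0,\infty)$. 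Hence $a^{ij}$ is uniformly elliptic and bounded, and $b^i$ is bounded since $F\in C^1$ and $\nabla v_t$ is bounded.

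With the uniformly elliptic equation $Lw=0$, $w>0$ in $B$, $w(x_0)=0$ in hand, I conclude by the standard barrier argument. Writing $B=B_R(y)$, I compare $w$ on the closed annulus $\overline{B_R(y)}\setminus B_{R/2}(y)$ with $\e\,h$, where $h(x)=e^{-\alpha|x-y|^2}-e^{-\alpha R^2}$; choosing $\alpha$ large (using the uniform lower ellipticity bound together with the upper bounds on $\mathrm{tr}\,a$ and on $|b|$) gives $Lh>0$ on the annulus, and choosing $\e$ small makes $w\geq \e h$ on the inner sphere while $w\geq 0 = \e h$ on the outer sphere. The minimum principle for $L$ (valid precisely because $L$ has no zeroth-order term) then yields $w\geq \e h$ throughout the annulus, so that, since $w(x_0)=\e h(x_0)=0$, the inner normal derivatives satisfy $\partial_{\vec n} w(x_0)\geq \e\,\partial_{\vec n} h(x_0)>0$, which is the claimed strict inequality. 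The main obstacle is the ellipticity bookkeeping of the second paragraph: one must see that uniform convexity of the solutions, rather than any a priori two-sided Hessian bound, already forces the linearized matrix $a^{ij}$ to be simultaneously bounded and uniformly elliptic, via the determinant staying pinched away from $0$ and $\infty$; once this is secured the remainder is the textbook Hopf construction.
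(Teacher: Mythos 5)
Your strategy is the same as the paper's: subtract the two copies of \eqref{eq: MA}, write the difference of determinants as an integral of cofactor matrices along the segment $v_t=tv_2+(1-t)v_1$ and the difference of the $F$-terms in mean-value form, obtain a linear uniformly elliptic equation $a^{ij}\partial_{ij}w-b^i\partial_i w=0$ with no zeroth-order term for $w=v_2-v_1$, and conclude by the classical Hopf boundary-point lemma (the paper simply cites \cite[Theorem 3.5]{GT01}, while you reproduce the barrier construction; both are fine).

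There is, however, a genuine error, and it sits exactly in the step you single out as the crux. You assert that ``from \eqref{eq: MA} its determinant equals $F(x,\nabla v_t)$,'' where ``its'' refers to $D^2v_t$ for the interpolant $v_t=tv_2+(1-t)v_1$. This is false for $0<t<1$: the Monge--Amp\`ere equation is not linear, the determinant is not affine on symmetric matrices, and a convex combination of two solutions is in general not a solution. Consequently your two-sided pinching of $\det D^2v_t$, and with it the eigenvalue bound $\mu_k\le \det D^2v_t/\lambda_0^{n-1}$, is unjustified as written. The repair is short and uses the equation only where it actually holds, namely at the endpoints $t=0,1$: uniform convexity gives $D^2v_i\ge\lambda_0 I$ in $B$, and since $\det D^2 v_i=F(x,\nabla v_i)\le M_0:=\sup_{\overline B\times K}F<\infty$ (with $K$ a compact set containing the bounded sets $\nabla v_i(\overline B)$), every eigenvalue of $D^2v_i$ is at most $M_0/\lambda_0^{n-1}$, so $\lambda_0 I\le D^2v_i\le (M_0/\lambda_0^{n-1})I$ in $B$ for $i=1,2$. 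Because $D^2v_t$ is a convex combination of two matrices obeying this two-sided bound, it obeys the same bound; hence the eigenvalues of $\mathrm{cof}(D^2v_t)$, being products of $n-1$ eigenvalues of $D^2v_t$, lie in a fixed compact subinterval of $(0,\infty)$, and so do those of $a^{ij}=\int_0^1\mathrm{cof}_{ij}(D^2v_t)\,dt$. With this substitution, the rest of your argument (boundedness of $b^i$, the barrier $e^{-\alpha|x-y|^2}-e^{-\alpha R^2}$ on the annulus, and the minimum principle, which applies since there is no zeroth-order term) goes through verbatim and yields the claimed strict inequality for the inner normal derivatives.
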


\begin{lemma}[Strong maximum principle for Monge-Amp\`ere]\label{lem: strong max}
Let $U$ be an open connected domain and 
 $v_1, v_2\in C^2(U)$ be uniformly convex solutions of \eqref{eq: MA}  in $U$, where $F$ is a differentiable function. Suppose $v_2\geq v_1$ on $U$ and there is a point $x_0 \in U$ such that $v_1(x_0)=v_2(x_0)$, then $v_1=v_2$ on $U$. 
\end{lemma}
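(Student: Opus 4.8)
The plan is to derive the strong maximum principle from the Hopf lemma just established (Lemma~\ref{lem: Hopf}) via the classical sliding-ball argument. Set $w = v_2 - v_1 \ge 0$ and let $\Sigma = \{x \in U : w(x) = 0\}$ be the coincidence set; it is relatively closed in $U$ and nonempty, since $x_0 \in \Sigma$. The goal is to show $\Sigma = U$, which is exactly the assertion $v_1 = v_2$. I proceed by contradiction: suppose $\Sigma \neq U$. Because $U$ is connected, $\Sigma$ cannot be open, so there are points of $U \setminus \Sigma$ arbitrarily close to $\Sigma$.

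The key step is to produce an interior ball on whose closure the two solutions touch. I would choose a point $z \in U \setminus \Sigma$ close enough to $\Sigma$ that $\dist(z, \Sigma) < \dist(z, \partial U)$; such a $z$ exists by the previous observation. Let $B = B_r(z)$ with $r = \dist(z, \Sigma)$. Then $B \subset U \setminus \Sigma$, so $v_2 > v_1$ on $B$; moreover $\overline{B} \subset U$ meets $\Sigma$ at some point $x_1 \in \partial B$ lying in the interior of $U$, and there $v_1(x_1) = v_2(x_1)$. Thus $v_1, v_2$ satisfy all hypotheses of Lemma~\ref{lem: Hopf} on $B$ at the boundary point $x_1$: they are in $C^2(B) \cap C^1(\overline{B})$, uniformly convex by assumption, and both solve \eqref{eq: MA}, so no regularity beyond what is assumed is needed.

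Applying Lemma~\ref{lem: Hopf} yields the strict inequality $\frac{\partial v_2}{\partial \vec n}(x_1) > \frac{\partial v_1}{\partial \vec n}(x_1)$, where $\vec n$ is the inner normal to $\partial B$ at $x_1$. On the other hand, since $x_1$ is an interior point of $U$ and $w = v_2 - v_1 \ge 0$ attains its minimum value $0$ there, the $C^1$ regularity of $v_1, v_2$ forces $\nabla w(x_1) = 0$, hence $\frac{\partial v_2}{\partial \vec n}(x_1) = \frac{\partial v_1}{\partial \vec n}(x_1)$. This contradiction shows $\Sigma = U$, completing the proof.

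I expect the point requiring most care to be the geometric construction of the touching ball: one must guarantee that the first contact point $x_1$ is genuinely interior to $U$, so that the interior-minimum argument forcing $\nabla w(x_1)=0$ applies. This is precisely why I choose $z$ with $\dist(z,\Sigma) < \dist(z,\partial U)$, which keeps $\overline{B}$ compactly inside $U$ while still reaching $\Sigma$. As a robustness check, the same conclusion can be reached without the Hopf lemma by linearizing: writing $v_t = (1-t)v_1 + t v_2$ and integrating $\frac{d}{dt}\det(D^2 v_t)$ and $\frac{d}{dt}F(x,\nabla v_t)$ over $t \in [0,1]$ expresses $0 = \det(D^2 v_2) - \det(D^2 v_1) - [F(x,\nabla v_2) - F(x,\nabla v_1)]$ as $a^{ij}(x)\,\partial_{ij} w - b^i(x)\,\partial_i w = 0$, where $a^{ij}$ is the $t$-averaged cofactor matrix of $D^2 v_t$ (positive definite, hence locally uniformly elliptic, by uniform convexity) and $b^i$ is bounded since $F \in C^1$; the classical strong maximum principle for this zeroth-order-free linear elliptic operator then gives $w \equiv 0$. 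I will present the Hopf-based argument as the primary proof, since the Hopf lemma is already in hand.
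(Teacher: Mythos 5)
Your proof is correct, and it takes a genuinely different route from the paper's. The paper does not deduce Lemma~\ref{lem: strong max} from Lemma~\ref{lem: Hopf}; instead it proves both lemmas simultaneously by a single linearization: writing $w=v_1-v_2$ and integrating $\frac{d}{dt}$ along the segment of convex combinations of $v_1,v_2$, the difference of the two Monge--Amp\`ere equations \eqref{eq: MA} becomes a linear second-order equation $a_{ij}D^2_{ij}w+c_jD_jw=0$, whose coefficient matrix $a_{ij}$ (an averaged cofactor matrix) is elliptic by uniform convexity and $C^2$ regularity and whose first-order coefficients come from $F\in C^1$; the classical Hopf lemma and strong maximum principle for linear elliptic equations \cite[Theorem 3.5]{GT01} then finish both statements at once. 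You instead take the nonlinear Hopf lemma as a black box and run the classical sliding-ball argument: the coincidence set $\Sigma$ is relatively closed and, if proper, you produce a ball $B_r(z)$ with $r=\dist(z,\Sigma)<\dist(z,\partial U)$ on which $v_2>v_1$ and whose closure touches $\Sigma$ at an interior point $x_1$; the Hopf inequality at $x_1$ then contradicts $\nabla(v_2-v_1)(x_1)=0$, which holds because $x_1$ is an interior minimum of $v_2-v_1\ge 0$. Your attention to making the contact point interior (choosing $\dist(z,\Sigma)<\dist(z,\partial U)$, which also guarantees that the distance to the merely relatively closed set $\Sigma$ is attained inside $U$) is exactly where care is needed, and you handle it correctly. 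As for what each approach buys: the paper's route is shorter and disposes of both lemmas with one computation plus a citation to standard linear theory; yours is more modular, exhibiting the strong maximum principle as a purely topological consequence of the Hopf lemma with no further PDE input --- and your ``robustness check'' via linearization is, essentially verbatim, the paper's actual proof, so both routes are sound.
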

\begin{proof}[\bf Proof of Lemmata~\ref{lem: Hopf} and \ref{lem: strong max}] 
Following a standard argument below, we reduce to the classical Hopf's lemma and the strong maximum principle of the liner uniformly elliptic equations. 
First we note that since the solutions are $C^2$, then the Monge-Ampere equations are uniformly convex (non-degenerate). 
We observe that for $w=v_1-v_2$, 
\begin{align*}
0 & = \det(D^2v_1)-\det(D^2v_2) -  (F(x, \nabla v_1(x) ) - F(x, \nabla v_2(x))) \\
&=\int_0^1\frac{d}{dt}\left[\det((D^2(tv_1+(1-t)v_2)(x)) + F(x, \nabla ((1-t)v_1 + tv_2))(x))\right]dt\\
&=a_{ij}D^2_{ij}w + c_j D_j w
\end{align*}
where
\begin{align*}
 a_{ij} &= [\det D^2 (tv_1+ (1-t)v_2)] \left[D^2 (tv_1+ (1-t)v_2)\right]^{-1}_{ij} \\
 c_i (x)  &= \left[D_{p_i} F \right](x, \nabla (1-t) v_1(x) + t v_2 (x)).
\end{align*}
Here, $D_{p_i} F$ is the derivative of $F(x, p)$ in the $p$ variable. 
We notice that  the coefficients $a_{ij}$  are uniformly elliptic $U$ due to uniform convexity and the $C^2$ regularity of $v_1$, $v_2$. 
Now, one can use the classical  Hopf's lemma  and the strong maximum principle for liner elliptic equations (see e.g. \cite[Theorem 3.5]{GT01}). 
\end{proof}

\subsection{Strict monotonicity of the active regions. }
Suppose we have two Lagrange multipliers $\l_2>\l_1$. We would like to study the strict monotonicity of the active regions. We already know that they are all monotone, but at this time we can't discard points for which the free boundary for different Lagrange multipliers remain fixed.

To achieve this we approach the problem in two steps. The first step is to note that if we have a point that belongs to both free boundaries, then the image through each optimal map is the same; here, we use the $C^1$ regularity of $u$ along the free boundary from \cite{Ca-Mc}. The second step is to use the monotonicity of the solutions plus a Hopf Lemma argument to contradict this fact.
For the discussion of this section, recall the notation $ v_i (x) = u_i (x) + \frac 1 2 |x|^2. $

\begin{lemma}[Touching implies the same gradient]\label{lem: touching} Under Assumptions~\ref{as: main} and \ref{as: tech}, 
let $\l_2>\l_1$ and suppose that there is $x_0$ such that $x_0\in \partial F_1$ and $x_0\in \partial F_2$. Then we have $\nabla v_1(x_0)=\nabla v_2(x_0)$.
\end{lemma}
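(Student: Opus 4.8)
The plan is to reduce the claim to the assertion that the two optimal maps have the same \emph{active-side} limit at $x_0$. Writing $v_i(x) = u_i(x) + \frac{1}{2}|x|^2$ as in \eqref{eq: v i}, each $v_i$ is convex and $\nabla v_i = T_i = \mathrm{id} + \nabla u_i$ on the interior of the active region $A_i$. By the $C^1_{loc}$ regularity of the potentials up to the free boundary (Theorem~\ref{th: Ca-Mc}), $\nabla v_i$ extends continuously to $F_i$, so that $\nabla v_i(x_0) := T_i(x_0)$ is well defined as this one-sided limit (note $v_i$ itself has a genuine kink at $x_0$, its subdifferential being the whole segment $[x_0, T_i(x_0)]$, since $v_i = \frac{1}{2}|\cdot|^2$ on the inactive side). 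Thus it suffices to prove $T_1(x_0) = T_2(x_0)$.

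First I would record that the normalized potentials vanish at $x_0$. Each inactive point is matched with $\hat\infty$, so by the normalization $\hat u_{\lambda_i}(\hat\infty)=0$ and \eqref{eq: u and m} one has $u_i\equiv 0$ off the active region; as the $\hat c$-convex potentials are continuous and $x_0\in\partial A_i$, this gives $u_1(x_0)=u_2(x_0)=0$, hence $v_1(x_0)=v_2(x_0)=\frac{1}{2}|x_0|^2$. Next I would extract one inequality from monotonicity. By Theorem~\ref{th: monotonicity}, $v_1\le v_2$ on $\Omega$, and since the two convex functions agree at $x_0$, every subgradient of $v_1$ there is a subgradient of $v_2$, i.e. $\partial v_1(x_0)\subseteq\partial v_2(x_0)$. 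The interior ball condition \eqref{eq: IBC} shows that $B_{|x_0-T_i(x_0)|}(T_i(x_0))\subseteq A_i$ is tangent to $F_i$ at $x_0$, so $T_i(x_0)-x_0$ is parallel to the inner normal of $F_i$ at $x_0$; and because $F_1,F_2$ are $C^1$ and bound the nested regions $A_1\subseteq A_2$ through the common point $x_0$, they are tangent there and this normal direction $\nu$ is common. Consequently $T_i(x_0)=x_0+a_i\nu$ with $a_i>0$, and the subdifferential containment forces $a_1\le a_2$; that is, $T_1(x_0)$ lies on the segment $[x_0,T_2(x_0)]$.

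The main obstacle is the reverse inequality $a_2\le a_1$, and this is exactly where the $C^1$ regularity of $u$ along the free boundary enters. The point to establish is that $T_1(x_0)$, although only a priori a subgradient of $v_2$ at $x_0$, is in fact one of the two \emph{reachable} gradients of $v_2$ at the free-boundary point $x_0$, namely $x_0$ or $T_2(x_0)$: informally, the two optimal maps must agree to first order at a common free-boundary point. Since $|T_1(x_0)-x_0|=a_1>0$ by the strict separation of $\Omega$ and $\Lambda$ (Assumption~\ref{as: main}), $T_1(x_0)\ne x_0$, and one concludes $T_1(x_0)=T_2(x_0)$. I would carry this out using the continuous extensions of $\nabla v_1,\nabla v_2$ to $F_1=F_2$ near $x_0$ together with the level-set identities $u_i|_{F_i}\equiv 0$, which pin the active-side gradients to the common normal $\nu$ and tie their magnitudes to the shared tangent interior ball. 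It is essential that this step avoid a Hopf-type comparison: the strict inequality $a_1<a_2$ that the Hopf lemma (Lemma~\ref{lem: Hopf}) yields on the tangent interior ball is precisely what the present equality $T_1(x_0)=T_2(x_0)$ will be played against in the next step to force $F_1\cap F_2=\emptyset$.
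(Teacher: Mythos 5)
The first half of your argument is correct, and it is in fact a cleaner route to the ``common direction'' part of the conclusion than the paper's: where the paper rules out two transversally crossing supporting balls using the $C^1$ regularity of the free boundaries, you obtain $T_1(x_0)\in[x_0,T_2(x_0)]$ directly from Theorem~\ref{th: monotonicity}, the identity $u_1(x_0)=u_2(x_0)=0$ (correctly justified: inactive points are matched with $\hat\infty$, so the normalized potentials vanish off the active regions and hence, by continuity, at $x_0$), and the resulting inclusion of subdifferentials $\partial v_1(x_0)\subseteq\partial v_2(x_0)=[x_0,T_2(x_0)]$. Up to the conclusion $T_i(x_0)=x_0+a_i\nu$ with $0<a_1\le a_2$, everything checks out.

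The genuine gap is the reverse inequality $a_2\le a_1$, which is where the entire content of the lemma lies, and your sketch for it is circular: ``the two optimal maps must agree to first order at a common free-boundary point'' \emph{is} the statement being proved, and the facts you propose to use cannot deliver it. Concretely, everything you have assembled --- the common normal $\nu$, the level-set identities $u_i\equiv 0$ on $F_i$ with $u_i\ge 0$, the interior tangent balls from \eqref{eq: IBC}, and $T_1(x_0)\in[x_0,T_2(x_0)]$ --- is perfectly consistent with the strict inequality $a_1<a_2$: two balls tangent to the common tangent plane at $x_0$ from the same side, the smaller contained in the larger, with $A_1$ nested inside $A_2$, violate none of these constraints, so no contradiction can be extracted from them alone. (Your phrase ``the continuous extensions of $\nabla v_1,\nabla v_2$ to $F_1=F_2$ near $x_0$'' also tacitly assumes the two free boundaries coincide near $x_0$; they are only known to share the single point $x_0$, and Theorem~\ref{th: strict free} will show they cannot even touch.) What is missing is exactly the input the paper uses to close the argument: by \cite[Corollary 6.9]{Ca-Mc}, the free boundary is mapped into the \emph{fixed} boundary, so both $T_1(x_0)$ and $T_2(x_0)$ lie on $\partial\Lambda$; since they also lie on the same ray emanating from $x_0$, the strict convexity of $\Lambda$ imposed in Assumption~\ref{as: tech} --- an assumption your argument never invokes --- forces them to coincide. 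Without the fixed-boundary fact and strict convexity, monotonicity and tangency alone do not suffice, so the proposal as written does not prove the lemma.
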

\begin{proof}
 Note that from Theorem~\ref{th: Ca-Mc},  we have that the optimal partial transport maps are given by $F_i = \nabla v_i$. Now, let us first consider the case 
\[
\frac{\nabla v_1 (x_0)}{|\nabla v_1 (x_0)|} \ne \frac{\nabla v_2 (x_0)}{|\nabla v_2 (x_0)|}.
\]
Then we have two supporting balls with different centers and radii, namely $B_{r_1}(\nabla v_1(x_0))$ and $B_{r_2}(\nabla v_2(x_0))$, whose boundaries cross at  $x_0$ transversally. Now, note that a graph satisfying the previous statement cannot be $C^1$, since no $C^1$ graph can have two supporting balls.  Therefore, since the free boundaries are $C^1$, as shown in \cite{Ca-Mc}, this leads to a contradiction. 

Now suppose we are in the second scenario
\[
\frac{\nabla v_1 (x_0)}{|\nabla v_1 (x_0)|} = \frac{\nabla v_2 (x_0)}{|\nabla v_2 (x_0)|}.
\]
 Note that this implies that the image $\nabla v_1(x_0)$ and $\nabla v_2(x_0)$ lie in the same line from the point $x_0$. But, notice that from \cite[Corollary 6.9]{Ca-Mc}, they are both on the boundary $\partial \Lambda$. Thus, strict convexity of $\Lambda$ implies $\nabla v_1(x_0) = \nabla v_2(x_0)$,  completing the proof. 
\end{proof}

We now prove the desired strict monotonicity of the active regions, by showing no intersection of the free boundaries  occurs for different values of $\lambda$. 
\begin{theorem}[Strict monotonicity of the active regions]\label{th: strict free}
Under Assumptions~\ref{as: main} and \ref{as: tech}, 
let $\l_2>\l_1$. Then we have that $F_2\cap F_1\cap \Omega =\emptyset$.
\end{theorem}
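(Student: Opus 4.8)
The plan is to argue by contradiction. Suppose there is a point $x_0 \in F_1 \cap F_2 \cap \Omega$; I will derive a contradiction by combining Lemma~\ref{lem: touching} with the monotonicity of the potentials (Theorem~\ref{th: monotonicity}) and the Hopf-type estimates (Lemmata~\ref{lem: Hopf} and \ref{lem: strong max}). First, Lemma~\ref{lem: touching} forces $\nabla v_1(x_0) = \nabla v_2(x_0) =: y_0$, where $y_0 \in \partial\Lambda$ is the common image point. I then localize: since $x_0 \in F_1^\Omega$, the interior ball property \eqref{eq: IBC} produces a ball $B \subset A_1^\Omega$ with $x_0 \in \partial B$. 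By the monotonicity of the active regions \eqref{eq: active monotone}, $B \subset A_2^\Omega$ as well, so on $B$ both $v_1$ and $v_2$ are uniformly convex $C^2$ solutions of the \emph{same} Monge-Amp\`ere equation $\det D^2 v = f(x)/g(\nabla v)$ (the interior regularity of Caffarelli recalled before Assumption~\ref{as: tech}), agreeing in gradient at the boundary point $x_0$.

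The heart of the argument is to convert the equal-gradient condition at $x_0$ into a violation of Hopf's lemma, for which I need a one-sided comparison on $B$ with $x_0$ as an extremal boundary point. Theorem~\ref{th: monotonicity} gives $v_2 \ge v_1$; the complementary bound I would establish is $v_2 - v_1 \le \lambda_2 - \lambda_1$ on $B$. This follows from the $\hat c$-convex inequality together with the analogous monotonicity $u_1^{\hat c} \le u_2^{\hat c}$ of the conjugate potentials, which holds by applying Theorem~\ref{th: monotonicity} with the roles of $\Omega$ and $\Lambda$ interchanged (Assumption~\ref{as: main} is symmetric): using $y = \nabla v_2(x) \in \Lambda$ as a competitor in the definition of $u_1(x)$ gives $u_2(x)-u_1(x) \le (\lambda_2-\lambda_1) + (u_1^{\hat c}-u_2^{\hat c})(\nabla v_2(x)) \le \lambda_2-\lambda_1$. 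At the contact point itself, since $y_0 \in \partial\Lambda$ lies outside both active target regions the conjugates vanish there, so \eqref{eq: u and m} yields $u_i(x_0) = -c(x_0,y_0)+\lambda_i$ and hence $v_2(x_0)-v_1(x_0)=\lambda_2-\lambda_1$ exactly. Thus the shifted function $\tilde v_1 := v_1 + (\lambda_2-\lambda_1)$ solves the same equation on $B$, satisfies $\tilde v_1 \ge v_2$, and touches $v_2$ at $x_0 \in \partial B$ with $\nabla \tilde v_1(x_0) = \nabla v_2(x_0)$.

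With this setup I invoke the dichotomy of Lemmata~\ref{lem: strong max} and \ref{lem: Hopf}. If $\tilde v_1 > v_2$ throughout the interior of $B$, then Hopf's lemma (Lemma~\ref{lem: Hopf}) applied at $x_0$ forces $\partial_{\vec n}\tilde v_1(x_0) > \partial_{\vec n} v_2(x_0)$ for the inner normal $\vec n$, which contradicts $\nabla \tilde v_1(x_0) = \nabla v_2(x_0)$. Otherwise $\tilde v_1$ and $v_2$ coincide at some interior point of $B$, and the strong maximum principle (Lemma~\ref{lem: strong max}) gives $v_2 \equiv v_1 + (\lambda_2-\lambda_1)$ on $B$; propagating this equality across the connected active region makes the optimal maps $\nabla v_1$ and $\nabla v_2$ agree on all of $A_1^\Omega$, and I would exclude this degenerate coincidence using the strict convexity of $\Omega,\Lambda$ (Assumption~\ref{as: tech}) together with the distinct inactive behavior of $u_1$ and $u_2$ outside the respective free boundaries.

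I expect the main obstacle to be twofold. The first is establishing the sharp two-sided bound $0 \le v_2-v_1 \le \lambda_2-\lambda_1$ with the exact equality at the contact point, since it is precisely this that makes $x_0$ an extremal point of the difference and brings Hopf's lemma into play; this rests on combining the monotonicity of both the primal and the conjugate potentials. The second, and more delicate, is ruling out the degenerate case $v_2 \equiv v_1 + (\lambda_2-\lambda_1)$, where the strong maximum principle reduces the problem to showing that the two transport maps cannot agree identically on the smaller active region.
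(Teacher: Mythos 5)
Your high-level architecture (contradiction, Lemma~\ref{lem: touching}, localization to an interior ball from \eqref{eq: IBC}, then a Hopf/strong-maximum-principle dichotomy) is the same as the paper's, but the step you yourself call the heart of the argument is false, and it destroys the contact configuration your Hopf application needs. You claim $u_i(x_0)=-c(x_0,y_0)+\lambda_i$ for both $i$, on the grounds that ``$y_0\in\partial\Lambda$ lies outside both active target regions, so the conjugates vanish there.'' Lying on $\partial\Lambda$ does not mean lying outside the active region: by \cite[Corollary 6.9]{Ca-Mc} images of free-boundary points land on $\partial\Lambda$ precisely because the closure of the target active region reaches $\partial\Lambda$, and $y_0=T_i(x_0)$ belongs to $\spt g_{\lambda_i}$, i.e.\ to that closure. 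The set where $\hat u_i^{\hat c}$ vanishes is the closure of the \emph{inactive} target region (points matched with $\hat\infty$, cf.\ \eqref{eq: match with infty}), not all of $\partial\Lambda$. What is actually true at the contact point is the opposite extreme: each normalized potential vanishes on its own inactive region and hence, by continuity, on its free boundary, so $u_1(x_0)=u_2(x_0)=0$ and $v_2(x_0)-v_1(x_0)=0$. Your claimed equalities are even internally inconsistent: from $(x_0,y_0)\in\spt\gamma_i\subset\partial^{\hat c}\hat u_i$ and $u_i(x_0)=0$ one gets $\hat u_i^{\hat c}(y_0)=\lambda_i-c(x_0,y_0)$ for \emph{both} $i$ with the same $y_0$, so if both conjugates vanished there you would conclude $\lambda_1=c(x_0,y_0)=\lambda_2$; in fact $\hat u_2^{\hat c}(y_0)\ge\lambda_2-\lambda_1>0$. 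Consequently $x_0$ is a minimum point of $v_2-v_1\ge 0$, not a maximum point, your shifted function $\tilde v_1=v_1+(\lambda_2-\lambda_1)$ satisfies $\tilde v_1(x_0)>v_2(x_0)$, there is no touching at $x_0$, and Lemma~\ref{lem: Hopf} cannot be invoked: the second and third paragraphs of your proposal collapse.

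The repair is simpler than what you attempted and is exactly the paper's proof: run the comparison at the \emph{lower} contact. Theorem~\ref{th: monotonicity} gives $v_2\ge v_1$; the vanishing of both normalized potentials on their free boundaries gives $v_2(x_0)=v_1(x_0)$; on the interior ball $B\subset A_1\subset A_2$ touching $F_1$ at $x_0$, both $v_i$ are $C^2$ uniformly convex solutions of the same Monge--Amp\`ere equation, so either $v_2>v_1$ in $B$, in which case Lemma~\ref{lem: Hopf} yields $\partial_{\vec n}v_2(x_0)>\partial_{\vec n}v_1(x_0)$, contradicting $\nabla v_1(x_0)=\nabla v_2(x_0)$ from Lemma~\ref{lem: touching}; or $v_2=v_1$ somewhere inside, in which case Lemma~\ref{lem: strong max} gives $v_1\equiv v_2$ there, a degenerate case that must then be excluded (e.g.\ $u_1\equiv u_2$ on $A_1$ forces $u_2=0$ on $F_1\cap\Omega$, while $u_2>0$ in the interior of $A_2$ because $u_2\ge 0$ and $\nabla u_2\neq 0$ on the active region by the separation of $\Omega$ and $\Lambda$; this pushes $F_1\cap\Omega$ into $\partial A_2$ and contradicts $m_1<m_2$). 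Neither your two-sided bound $v_2-v_1\le\lambda_2-\lambda_1$ nor the conjugate monotonicity is needed anywhere in this corrected argument.
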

\begin{proof}
First, we suppose that there is  an interior point in the free boundary that belongs to both free boundaries, that is, there is $x_0\in F_2\cap F_1\cap \Omega$. Due to Lemma~\ref{lem: touching}, we know that $\nabla v_2 (x_0 ) = \nabla v_1 (x_0)$. Now, due to the monotonicity of $u_2$ and $u_1$ (Theorem~\ref{th: monotonicity}) we can apply the strong maximum principle (Lemma~\ref{lem: strong max})  in any region in $A_1\subset A_2$, where both functions are $C^2$, showing  that $u_2 > u_1$ (thus $v_2 > v_1$) in $A_1$. 
Finally, notice that at the point $x_0$, the free boundary $F_1$ satisfy the interior ball condition \eqref{eq: IBC}, thus we can apply Hopf Lemma (Lemma~\ref{lem: Hopf}) to derive a contradiction. This completes the proof. 
\end{proof}

\bibliography{mybiblio-dynamicsOT}

\def\cprime{$'$}
\begin{thebibliography}{10}

\bibitem{Agueh-Carlier11}
Martial Agueh and Guillaume Carlier.
\newblock Barycenters in the {W}asserstein space.
\newblock {\em SIAM J. Math. Anal.}, 43(2):904--924, 2011.

\bibitem{Caf90-W2}
Luis~A. Caffarelli.
\newblock Interior {$W^{2,p}$} estimates for solutions of the
  {M}onge-{A}mp\`ere equation.
\newblock {\em Ann. of Math. (2)}, 131(1):135--150, 1990.

\bibitem{Ca-Mc}
Luis~A. Caffarelli and Robert~J. McCann.
\newblock Free boundaries in optimal transport and {M}onge-{A}mp\`ere obstacle
  problems.
\newblock {\em Ann. of Math. (2)}, 171(2):673--730, 2010.

\bibitem{Chen-Indrei15}
S.~Chen and E.~Indrei.
\newblock On the regularity of the free boundary in the optimal partial
  transport problem for general cost functions.
\newblock {\em J. Differential Equations}, 258(7):2618--2632, 2015.

\bibitem{Fi09}
Alessio Figalli.
\newblock A note on the regularity of the free boundaries in the optimal
  partial transport problem.
\newblock {\em Rend. Circ. Mat. Palermo (2)}, 58(2):283--286, 2009.

\bibitem{Fi10}
Alessio Figalli.
\newblock The optimal partial transport problem.
\newblock {\em Arch. Ration. Mech. Anal.}, 195(2):533--560, 2010.

\bibitem{GM96}
Wilfrid Gangbo and Robert~J. McCann.
\newblock The geometry of optimal transportation.
\newblock {\em Acta Math.}, 177(2):113--161, 1996.

\bibitem{GT01}
David Gilbarg and Neil~S. Trudinger.
\newblock {\em Elliptic partial differential equations of second order}.
\newblock Classics in Mathematics. Springer-Verlag, Berlin, 2001.
\newblock Reprint of the 1998 edition.

\bibitem{Kan04b}
L.~V. Kantorovich.
\newblock On mass transportation.
\newblock {\em Zap. Nauchn. Sem. S.-Peterburg. Otdel. Mat. Inst. Steklov.
  (POMI)}, 312(Teor. Predst. Din. Sist. Komb. i Algoritm. Metody. 11):11--14,
  2004.

\bibitem{Kim-Pass14p}
Young-Heon Kim and Brendan Pass.
\newblock Wasserstein barycenters over riemannian manfolds.
\newblock {\em Preprint at arXiv:1412.7726}, 2014.

\bibitem{Kit-Pa-14p}
Jun Kitagawa and Brendan Pass.
\newblock The multi-marginal optimal partial transport problem.
\newblock {\em Preprint at arXiv:1401.7255}, 2014.

\bibitem{Loe09}
Gr{\'e}goire Loeper.
\newblock On the regularity of solutions of optimal transportation problems.
\newblock {\em Acta Math.}, 202(2):241--283, 2009.

\bibitem{MTW05}
Xi-Nan Ma, Neil~S. Trudinger, and Xu-Jia Wang.
\newblock Regularity of potential functions of the optimal transportation
  problem.
\newblock {\em Arch. Ration. Mech. Anal.}, 177(2):151--183, 2005.

\bibitem{McC95}
Robert~J. McCann.
\newblock Existence and uniqueness of monotone measure-preserving maps.
\newblock {\em Duke Math. J.}, 80(2):309--323, 1995.

\bibitem{Mon81}
Gaspard Monge.
\newblock M\'{e}moire sur la th\'{e}orie des d\'{e}blais et de remblais.
\newblock In {\em Histoire de l'{A}cad\'{e}mie Royale des Sciences de {P}aris,
  avec les {M}\'{e}moires de Math\'{e}matique et de Physique pour la m\^{e}me
  Ann\'{e}e}, pages 666--704. 1781.

\bibitem{TW09}
Neil~S. Trudinger and Xu-Jia Wang.
\newblock On the second boundary value problem for {M}onge-{A}mp\`ere type
  equations and optimal transportation.
\newblock {\em Ann. Sc. Norm. Super. Pisa Cl. Sci. (5)}, 8(1):143--174, 2009.

\bibitem{DePhSMV14}
G.~De Philippis A. R. M\'esz\'aros F. Santambrogio~B. Velichkov.
\newblock Bv estimates in optimal transportation and applications.
\newblock {\em Preprint at http://cvgmt.sns.it/paper/2559/}.

\bibitem{Vil03}
C{\'e}dric Villani.
\newblock {\em Topics in optimal transportation}, volume~58 of {\em Graduate
  Studies in Mathematics}.
\newblock American Mathematical Society, Providence, RI, 2003.

\bibitem{Vil09}
C{\'e}dric Villani.
\newblock {\em Optimal transport: Old and new}, volume 338 of {\em Grundlehren
  der Mathematischen Wissenschaften [Fundamental Principles of Mathematical
  Sciences]}.
\newblock Springer-Verlag, Berlin, 2009.

\end{thebibliography}
\bibliographystyle{plain}

\end{document}